\documentclass[12pt]{article}

\usepackage{amsmath,amssymb,amsthm,textcomp,mathtools}
\usepackage{amsfonts,graphicx}
\usepackage[mathscr]{eucal}
\usepackage{color}
\usepackage{csquotes}
\usepackage{enumitem}
\usepackage{authblk} 
\usepackage{comment}

\numberwithin{equation}{section}

\theoremstyle{definition}

\numberwithin{equation}{section}

\newtheorem{theorem}{\bf Theorem}[section]
\newtheorem{remark}{\bf Remark}[section]
\newtheorem{proposition}{Proposition}[section]
\newtheorem{lemma}{Lemma}[section]
\newtheorem{corollary}{Corollary}[section]
\newtheorem{example}{Example}[section]

\newtheoremstyle
{remarkstyle}
{}
{11pt}
{}
{}
{\bfseries}
{:}
{     }
{\thmname{#1} \thmnumber{#2} }

\theoremstyle{remarkstyle}

\begin{document}

    \title{Convergence of random sums in non-commutative probability 
    }
    \author{Arup Bose\thanks{bosearu@gmail.com, \ Research  
supported by J.C.~Bose National Fellowship, JBR/2023/000023 from Anusandhan National Research Foundation, Govt. of India.}}

\author{Pradeep Vishwakarma\thanks{vishwakarmapr.rs@gmail.com, Research  
supported by National Post Doctoral Fellowship, PDF/2025/000076 from Anusandhan National Research Foundation, Govt. of India.}}
\affil{Theoretical Statistics and Mathematics Unit\\ Indian Statistical Institute, Kolkata, 700108, India}
	\date{\today}	
	\maketitle
	\begin{abstract} 
    Random sums of independent random variables have been extensively studied in classical probability theory. We consider random sums of self-adjoint variables from a non-commutative probability space, and establish several $*$-convergence results. In particular, we show that the joint $*$-convergence of the standardized random sum of identically distributed self-adjoint variables and the standardized stopping random variable (rv) is equivalent to the convergence of all moments of the stopping rv together with the convergence of the ratio of its mean to its variance.
    We obtain central limit theorems for the random sums of free, independent and half independent self-adjoint variables with both deterministic and random scaling.  Furthermore, we derive some scaling $*$-convergence limits for randomly indexed self-adjoint variables.
   \end{abstract}  
\noindent \textbf{AMS Subject Classification [2020]} Primary 46L54; Secondary 60E05.
\\

\noindent\textbf{Keywords:} $*$-probability space, random sum, free independent, half independence, free cumulants, moments, non-commutative probability, non-crossing partitions, $*$-convergence, weak convergence, semi-circle variable, Gaussian variable, symmetric Rayleigh variable.

\section{Introduction}\label{sec1} 
Let $\{Y_n\}$ be independent identically distributed (iid) random variables (rvs), independent of non-negative integer-valued rvs $\{N_n\}$, 
$n \geq 1$.
As $n\rightarrow\infty$, the asymptotic behavior of the partial random sums $Y_1+\dots+Y_{N_n}$ has been analyzed extensively from various perspectives,  dating back to early works of \cite{Anscombe1952}, \cite{Robbins1948}, \cite{Reny1960} and others. Relatively recent works include 
\cite{Barbour2006, Gnedenko1996, Silvestrov2004, Watanabe2008} and the references therein. We quote the following result from  \cite{Robbins1948}:  
Let 
$Z$ be a standard Gaussian rv, independent of $\{N_n\}$, and define
\begin{align}
\tilde{N}_n:&=\dfrac{N_n-\mathbb{E}[N_n]}{\sqrt{\mathbb{V}\mathrm{ar}[N_n]}},\label{Nndefn}\\
S_{N_{n}}&=Y_1+\cdots + Y_{N_{n}}, \label{SNndefn}\\
\sigma_n^2&:= \mathbb{V}\mathrm{ar}[S_{N_n}]=\mathbb{E}[N_n]\mathbb{V}\mathrm{ar}[Y_1]+\mathbb{V}\mathrm{ar}[N_n](\mathbb{E}[Y_1])^2,\label{sigmandefn}
\\
\delta_n^2&:=\mathbb{V}\mathrm{ar}[N_n][\mathbb{E}Y_1]^2/
\sigma_n^2
\ \ (\text{provided it is well-defined}).\nonumber
 \end{align}
\begin{theorem}\label{prop:classical}
    Let $\{Y_n\}$ be iid 
    with $0<\mathbb{E}[Y_1^2]<\infty$. Let $\{N_n\}$ be non-negative integer-valued, independent of $\{Y_n\}$,  with finite  variance. Let $\sigma_n^2$ be as in (\ref{sigmandefn}).\vskip3pt
    
   \noindent (i) If $\mathbb{E}[Y_1]\neq0$, and
    as $n\rightarrow\infty$, $\sigma_n^2\to \infty$,
    along with 
    $\sqrt{\mathbb{V}\mathrm{ar}[N_n]}=o(\sigma_n^2)$,
    then 
    \begin{equation}\label{introre1}
    \mathbb{E}\exp\Big[\iota t(\frac{S_{N_n}-\mathbb{E}[N_nY_1]}
    {\sigma_n})\Big]=\mathbb{E}\exp[\iota\delta_nt\tilde{N}_n]\mathbb{E}\exp[\iota t\sqrt{1-\delta_n^2}Z]+o(1),\ t\in\mathbb{R},
\end{equation}
where $\tilde{N}_n$ and $S_{N_{n}}$ are as in (\ref{Nndefn}) and (\ref{SNndefn}), respectively. \vskip5pt

\noindent (ii)  If $\mathbb{E}[Y_n]=0$, and as $n\rightarrow\infty$, $\mathbb{E}[N_n]\rightarrow\infty$, $\lim_{n\rightarrow\infty}\frac{\sqrt{\mathbb{V}\mathrm{ar}[N_n]}}{\mathbb{E}[N_n]}=:\xi\in(0,\infty)$, and 
$\tilde{N}_n$ converges weakly to a random variable $N$, 
then 
\begin{equation*}
    \lim_{n\rightarrow\infty}\mathbb{E}\exp\Big[\iota t\frac{S_{N_n}}
    {\sigma_n}\Big]
    =\int_{0}^{\infty}e^{-t^2x/2}\,\mu(dx), 
    \ t\in\mathbb{R},
\end{equation*}
where $\mu$ is the probability distribution of $1+N\xi$.
\end{theorem}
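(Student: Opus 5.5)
The plan is to condition on $N_n$ and use the independence of $\{N_n\}$ and $\{Y_n\}$. Write $\mu=\mathbb{E}[Y_1]$, $v=\mathbb{V}\mathrm{ar}[Y_1]$ and $\phi(t)=\mathbb{E}e^{\iota t(Y_1-\mu)}$. Conditionally on $N_n=k$, the sum $S_{N_n}-N_n\mu$ is a sum of $k$ iid centred variables. Hence
\begin{equation*}
\mathbb{E}\exp\Big[\iota t\frac{S_{N_n}-\mathbb{E}[N_n]\mu}{\sigma_n}\Big]
=\mathbb{E}\Big[\exp\Big(\iota t\frac{(N_n-\mathbb{E}N_n)\mu}{\sigma_n}\Big)\,\phi\big(t/\sigma_n\big)^{N_n}\Big].
\end{equation*}
The first factor is exactly $\exp(\iota t\delta_n\tilde N_n)$ when $\mu>0$; when $\mu<0$, replace $\delta_n$ by $-\delta_n$, or equivalently note the sign convention for $\delta_n$. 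The whole task is then to replace $\phi(t/\sigma_n)^{N_n}$ by a deterministic quantity.

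\textbf{Part (i).} Since $\mathbb{E}Y_1^2<\infty$, we have $\phi(s)=1-vs^2/2+o(s^2)$, so $\log\phi(t/\sigma_n)=-vt^2/(2\sigma_n^2)+o(\sigma_n^{-2})$. Writing $N_n=\mathbb{E}N_n+\sqrt{\mathbb{V}\mathrm{ar}N_n}\,\tilde N_n$ gives
\begin{equation*}
\phi(t/\sigma_n)^{N_n}=\exp\Big(-\frac{t^2v\,\mathbb{E}N_n}{2\sigma_n^2}\Big)\exp\Big(-\frac{t^2 v\sqrt{\mathbb{V}\mathrm{ar}N_n}\,\tilde N_n}{2\sigma_n^2}\Big)\exp\big(N_n\,o(\sigma_n^{-2})\big).
\end{equation*}
By \eqref{sigmandefn}, $v\mathbb{E}N_n/\sigma_n^2=1-\delta_n^2$, so the first factor is $\mathbb{E}e^{\iota t\sqrt{1-\delta_n^2}Z}$. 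For the second factor, the hypothesis $\sqrt{\mathbb{V}\mathrm{ar}N_n}=o(\sigma_n^2)$ together with $\mathbb{E}|\tilde N_n|\le 1$ shows that its exponent tends to $0$ in $L^1$. For the third factor, $\mathbb{E}N_n\le\sigma_n^2/v$ and $\mathbb{V}\mathrm{ar}N_n\,\mu^2\le\sigma_n^2$, so $N_n/\sigma_n^2$ is bounded in $L^1$. Multiplying it by $o(1)$ gives a term that is $o(1)$ in probability.

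Because all factors have modulus at most one, we can use $|e^{a}-e^{b}|\le|a-b|$ for $\Re a,\Re b\le 0$. Some care is needed to handle $\phi$ not being in the domain of the principal logarithm for the finitely many small $N_n$. This is avoided by using $|z^k-w^k|\le k|z-w|$ for $|z|,|w|\le1$ with $w=e^{-vt^2/(2\sigma_n^2)}$, giving the bound $\mathbb{E}[N_n]\,o(\sigma_n^{-2})=o(1)$. Combining these steps with dominated convergence yields \eqref{introre1}.

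\textbf{Part (ii).} Now $\mu=0$ and $\sigma_n^2=v\,\mathbb{E}N_n$. The same bound gives
\begin{equation*}
\mathbb{E}\exp(\iota tS_{N_n}/\sigma_n)=\mathbb{E}\exp\big(-t^2N_n/(2\mathbb{E}N_n)\big)+o(1).
\end{equation*}
Write
\begin{equation*}
\frac{N_n}{\mathbb{E}N_n}=1+\frac{\sqrt{\mathbb{V}\mathrm{ar}N_n}}{\mathbb{E}N_n}\,\tilde N_n,
\end{equation*}
which converges weakly to $1+\xi N$ by Slutsky's lemma. This variable is non-negative since $N_n\ge0$. The function $x\mapsto e^{-t^2x/2}$ is bounded and continuous on $[0,\infty)$, so weak convergence gives the limit $\int_0^\infty e^{-t^2x/2}\mu(dx)$.

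The main obstacle is the uniform replacement of $\phi(t/\sigma_n)^{N_n}$ by $e^{-vt^2N_n/(2\sigma_n^2)}$ when $N_n$ is random and possibly unbounded. The $k|z-w|$ inequality handles this, because it only requires $\mathbb{E}N_n/\sigma_n^2$ to be bounded. One also needs $|\phi(s)-e^{-vs^2/2}|=o(s^2)$, which follows from the second-order Taylor expansion of both functions.
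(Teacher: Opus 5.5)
The paper gives no proof of this statement. It is quoted from Robbins (1948), and its joint version, Theorem~\ref{thmjtcov}, is stated with the proof omitted. So there is nothing internal to compare against. Your conditioning/characteristic-function argument is correct and is the classical route.

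It is much lighter than the paper's proof of the free analogue, Theorem~\ref{thm31}. That proof expands moments of $\tilde{\mathcal S}_{N_n}$ through free cumulants and compares them with those of $\mathcal S_{\lfloor\eta_n\rfloor}$, which forces the all-order hypothesis $\mathbb{E}|N_n-\eta_n|^m=O(\sigma_n^m)$. You use the multiplicative identity $\mathbb{E}[e^{\iota u(S_k-k\mathbb{E}Y_1)}]=\phi(u)^k$ instead. Hence you only need $\mathbb{E}N_n=O(\sigma_n^2)$ and $\mathbb{E}|N_n-\mathbb{E}N_n|=o(\sigma_n^2)$.

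A few points to tidy up, none of which is a real gap.

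\textbf{The factorisation in (i).} The factor $\exp(-t^2v\sqrt{\mathbb{V}\mathrm{ar}N_n}\,\tilde N_n/(2\sigma_n^2))$ does not have modulus at most one on its own; it exceeds one when $\tilde N_n<0$. The factor $\exp(N_n\,o(\sigma_n^{-2}))$ is best dropped altogether. With $w=e^{-vt^2/(2\sigma_n^2)}$, apply $|z^k-w^k|\le k|z-w|$. Then apply $|e^a-e^b|\le|a-b|$ to the real exponents $a=-vt^2N_n/(2\sigma_n^2)$ and $b=-vt^2\mathbb{E}N_n/(2\sigma_n^2)$. This gives
\[
\Big|\mathbb{E}\big[e^{\iota t\delta_n\tilde N_n}\phi(t/\sigma_n)^{N_n}\big]-\mathbb{E}\big[e^{\iota t\delta_n\tilde N_n}\big]e^{-t^2(1-\delta_n^2)/2}\Big|\le \mathbb{E}[N_n]\,\big|\phi(t/\sigma_n)-w\big|+\frac{vt^2}{2\sigma_n^2}\,\mathbb{E}|N_n-\mathbb{E}N_n| .
\]
Both terms are $o(1)$, and no dominated convergence is needed. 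The second term uses $\mathbb{E}|N_n-\mathbb{E}N_n|\le\sqrt{\mathbb{V}\mathrm{ar}N_n}\le\sigma_n/|\mathbb{E}Y_1|$. This also shows, as the paper's remark notes, that the hypothesis $\sqrt{\mathbb{V}\mathrm{ar}N_n}=o(\sigma_n^2)$ is automatic.

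\textbf{The degenerate case $v=0$.} Part (i) allows $v=\mathbb{V}\mathrm{ar}Y_1=0$. Then your bound $\mathbb{E}N_n\le\sigma_n^2/v$ is unavailable. However, $\phi\equiv 1=w$, so the first term above vanishes identically.

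\textbf{The sign of $\delta_n$.} Your sign remark is necessary, not cosmetic. Take $\delta_n\ge0$ as literally defined in the introduction, with $\mathbb{E}Y_1<0$. Then the two sides of \eqref{introre1} differ by a multiple of $e^{-t^2(1-\delta_n^2)/2}\,\mathbb{E}\sin(t\delta_n\tilde N_n)$. This need not vanish, for instance when $\delta_n\to\delta\neq0$ and $\tilde N_n$ has a non-symmetric weak limit. So one must take the signed convention $\delta_n=\sqrt{\mathbb{V}\mathrm{ar}N_n}\,\mathbb{E}Y_1/\sigma_n$, which is the $\gamma_n\alpha/\sigma_n$ of Section~\ref{sec4}.

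\textbf{The test function in (ii).} Use $x\mapsto e^{-t^2x^+/2}$, which is bounded and continuous on all of $\mathbb{R}$. Since $N_n/\mathbb{E}N_n\ge0$ and $[0,\infty)$ is closed, the weak limit $1+\xi N$ is carried by $[0,\infty)$.

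\textbf{Notation.} Your $\mu=\mathbb{E}Y_1$ clashes with the statement's $\mu$, the law of $1+N\xi$.
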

\begin{remark}
     Observe that 
     the condition $\sqrt{\mathbb{V}\mathrm{ar}[N_n]}=o(\sigma_n^2)$
     in Theorem \ref{prop:classical} (i) is redundant, 
     because 
    $\sigma_n^2=\mathbb{E}[N_n]\mathbb{V}\mathrm{ar}[Y_1]+\mathbb{V}\mathrm{ar}[N_n](\mathbb{E}[Y_1])^2$ implies,
$\mathbb{V}\mathrm{ar}[N_n]=O(\sigma_n^2)$.
\end{remark}
A non-commutative probability space (NCP)
is an extension of a classical probability space to a non-commutative setting. Our primary goal is to study random sums of variables (as opposed to random variables in a probability space) from an NCP, in particular, to establish results similar to those above and their consequences.

Free probability counterparts of various classical results for the deterministic sum of random variables are known. For example, the free central limit theorem (see \cite{Pata1996}) and the law of large numbers (see \cite{Bercovici1996}).  The free additive convolutions of probability measures on $\mathbb{R}$
were studied in \cite{Voiculescu1986}, \cite{Maassen1992} and \cite{Bercovici1993}, respectively, for measures that are compactly supported, 
that have finite variances, and that are arbitrary. 
In \cite{Anshelevich2000}, weak approximations of some free probability measures using sums of self-adjoint operators were obtained. In \cite{Bercovici1999}, various characterizations and limiting distributional properties of sums of identically distributed free variables were studied. An explicit analytical correspondence between the limit laws of free additive convolution and those of classical additive convolution was established. A generalization of these results to non-identically distributed free variables was obtained in \cite{Chistyakov2008}. For more details on different types of convergence for sums of free variables, see \cite{Bercovici1992, Bercovici1995, Kargin2007}, and references therein.

A special case of random sums in an NCP was studied in \cite{Bose2026}. Let $\{A_{j,n}\}$ be suitable independent $n\times n$ random patterned matrices, considered as elements of the matrix algebra with the state as the average trace. Let $N_n$ be a Poisson random variable which is independent of the matrices $\{A_{j,n}\}$.
They studied the  $*$-convergence of $n^{-1}(A_{1,n}+\dots+A_{N_n,n})$, and the weak limit of the empirical distributions of its eigenvalues. 

Let 
$(\mathcal{A},\varphi)$ be a $*$-probability space or an NCP, with the unity $1_\mathcal{A}$. Let $(\Omega,\mathcal{F},\mathbb{P})$ be a probability space. Let $(\mathcal{G}, \mathbb{E})$ be the (commutative) NCP, where $\mathcal{G}$ includes all non-negative random variables on $(\Omega, \mathcal{A},\mathbb{P})$ with all moments finite, and $\mathbb{E}$ is the expectation state. 
Let $N\in \mathcal{G}$ which is non-negative and integer valued, and $\{a_i\}$ be variables from $\mathcal{A}$, where $a_0=0$. We shall be interested in quantities such as $a_N$, $a_1+\cdots +a_N$, and $(a_1+\cdots +a_N)/\sqrt{N}$. By convention, empty sums are taken to be $0$. Empty products and $0/0$ are taken to be $1_\mathcal{A}$.  
Let $\tilde{\mathcal{A}}$ be the unital $*$-algebra  
generated by variables of the above type.
Define 
\begin{equation}\label{sumdef}
    \mathcal{S}_N\coloneqq a_1+\dots+a_N.
\end{equation}
In particular, $N\cdot1_\mathcal{A}\in \tilde{\mathcal{A}}$, 
and $1_\mathcal{A}$ continues to be the unity of this $*$-algebra. 
Define the linear functional $\tilde{\varphi}\coloneqq\mathbb{E}\varphi:\tilde{\mathcal{A}}\rightarrow\mathbb{C}$. Then, $(\tilde{\mathcal{A}},\tilde{\varphi})$ is a $*$-probability space or an NCP as the case may be. Let $\tilde\kappa$ denote the corresponding free cumulant function. 

Let $N_n\in \mathcal{G}$ be non-negative integer valued random variables 
and $\{a_j\}\subset \mathcal{A}$ be free, independent or half independent.
Define the standardized variable \begin{equation}\label{SNtilde}
\tilde{\mathcal{S}}_{N_n}\coloneqq\frac{\mathcal{S}_{N_n}-\tilde{\varphi}(\mathcal{S}_{N_n})}{\sqrt{\tilde{\kappa}_2(\mathcal{S}_{N_n})}}.
\end{equation} 
Relation (\ref{introre1}) is in terms of characteristic functions. However, in an NCP, the primary mode of convergence is that of ``moments'' computed with respect to the state. We establish approximations, inter alia, convergence results for moments 
of $(\tilde{\mathcal{S}}_{N_n}, \tilde{N_{n}})$
under appropriate conditions on the moments of $N_n$ and the $a_j$'s. Our method uses combinatorial relations between moments and cumulants/free cumulants/half cumulants. When the moments of the variables involved define suitable probability measures, then, these results lead to weak convergence results for the corresponding probability measures. In (\ref{introre1}), the Gaussian law arose. In our case, the semi-circle law arises via a semi-circle variable.  

In Section \ref{pre}, we recall some known concepts and results from non-commutative probability.
In Section \ref{sec3}, we outline the construction of the required NCP $(\tilde{\mathcal{A}},\tilde{\varphi})$,
and make some observations about the corresponding state and free cumulants.

In Section \ref{sec4}, we start with an extension of Theorem \ref{prop:classical}. 
We then state our main result in Theorem \ref{thm31}, which is the free analogue of that. 
A consequence is the following:   
suppose $\{a_i\}$ in (\ref{sumdef}) are free independent, and satisfy $\varphi(a_i)=\alpha\neq0$, $\kappa_2(a_i)=\theta^2<\infty$ and $\sup_{i}|\varphi(a_i^m)|<\infty$ for each $m\in\mathbb{N}$. Then under some appropriate moments conditions on $N_n$, the joint $*$-convergence of $\tilde{\mathcal{S}}_{N_n}$ 
and $\delta_n\tilde{N}_n=\delta_n\frac{N_n-\mathbb{E}[N_n]}{\sqrt{\mathbb{V}\mathrm{ar}[N_n]}}$ is equivalent to the convergence of $\delta_n\coloneqq\frac{\alpha\sqrt{\mathbb{V}\mathrm{ar}[N_n]}}{\sqrt{\theta^2\mathbb{E}[N_n]+\alpha^2\mathbb{V}\mathrm{ar}[N_n]}}$ and convergence of all moments of $\delta_n\tilde{N}_n$. Moreover, when $\varphi(a_i)=0$, and the sequence $\{(\frac{N_n}{\mathbb{E}[N_n]})^m\}$ is uniformly integrable for each $m\in\mathbb{N}$, their joint convergence is equivalent to the convergence of moments of $\frac{N_n}{\mathbb{E}[N_{n}]}$. A clt
for  $\tilde{\mathcal{S}}_{N_n}$ also follows.
In Section \ref{proof}, we provide some explicit moment calculation of $\tilde{\mathcal{S}}_{N_n}$, and the proof of 
Theorem \ref{thm31}.

In Section \ref{sec5}, we establish analogues of the results from Section \ref{sec4}, 
when 
$\{a_i\}$ 
are independent or half-independent. In both settings, the approximations and limiting behavior are similar to those in the free case. However, the approximations and limits involve standard Gaussian and standard symmetrized Rayleigh variables, respectively, instead of the semi-circular variable that arises when $\{a_i\}$ are free.

In Section \ref{sec6}, we investigate the $*$-convergence of randomly indexed variables. Let $\{a_n\}$  be self-adjoint variables.
Let $\{K_n\}$ be positive  numbers such that $(a_n-\alpha)/K_n \stackrel{*}{\to}l$ for some $\alpha\in\mathbb{R}$, as $n\rightarrow\infty$. Let $\{N_n\}$ be non-negative integer valued random variables. Suppose for a positive sequence $\{w_n\}$ increasing to $\infty$, $N_n/w_n\stackrel{\mathbb{P}}{\to} K$, where $K>0$ is a constant. Then, under an appropriate condition on $\{a_i\}$, 
we show that $(a_{N_{n}}-\alpha)/K_{w_n}\stackrel{*}{\to}l$.  
In Section \ref{sec7}, we derive clt-type results for random sums of self-adjoint variables with random scaling.
 In Section \ref{sec8}, we discuss some examples.
 
\section{Preliminaries}  \label{pre}
The sets of real, complex and natural numbers will be denoted by $\mathbb{R}$, $\mathbb{C}$ and $\mathbb{N}$, respectively. Convergence in probability will be denoted by $\stackrel{\mathbb{P}}{\to}$.
We cover some basic notions of non-commutative probability that are required for this article. For further details, see \cite{Nica2007, Voiculescu1992}.
\vskip5pt

\noindent \textbf{{Non-commutative probability and $*$-convergence.}}
 Let $\mathcal{A}$ be a unital algebra with unit element $1_\mathcal{A}$, and let $\varphi:\mathcal{A}\rightarrow\mathbb{C}$ be a linear functional with $\varphi(1_{\mathcal{A}})=1$. Elements of $\mathcal{A}$ will be called variables. The pair $(\mathcal{A},\varphi)$ is called a \textit{non-commutative probability} (NCP) space, and $\varphi$ is called a \textit{state}. It is called \textit{tracial} if $\varphi(ab)=\varphi(ba)$ for all $a,b\in\mathcal{A}$. If $\mathcal{A}$ is a $*$-algebra and $\varphi$ is \textit{positive}, that is, $\varphi(aa^*)\ge0$ for all $a\in\mathcal{A}$, then $(\mathcal{A},\varphi)$ is called a \textit{$*$-probability space}. 

Let $(\mathcal{A},\varphi)$ be an NCP. Unital sub-algebras $\{\mathcal{A}_i\}_{i\in I}$ of $\mathcal{A}$ are called \textit{independent} if for $r=1,2,\dots,k$, $k\in\mathbb{N}$, for any finite subset $J\subseteq I$, and $a_j\in\mathcal{A}_j$ for $j\in J$, $\varphi(\prod_{j\in J}a_j)=\prod_{j\in J}\varphi(a_j)$. They are called \textit{free independent} if for $a_{r}\in\mathcal{A}_{j_r}$, $j_r\in J$ such that $j_{l}\ne j_{l+1}$ for all $l=1,2,\dots,k-1$ and $\varphi(a_r)=0$ for all $r$, we have $\varphi(a_1\dots a_n)=0$. Variables $\{a_i\}_{i\in I}\subset\mathcal{A}$ are called \textit{freely independent (free)} if the sub-algebras generated by them are free. This concept can be extended to $*$-freeness in a $*$-probability space by including the starred variables. If an algebra or $*$-algebra is generated by free variables, then the state on this algebra is tracial. 
\vskip5pt

Let $(\mathcal{A},\varphi)$ be an NCP. The $m$th moment of any self-adjoint variable $a\in \mathcal{A}$  is defined as $\varphi(a^m)$ for $m\in\mathbb{N}$. If there is a unique probability measure $\mu_a$ with moments $\varphi(a^m)$, then $\mu_a$ is called the probability measure of $a$. For $m\in\mathbb{N}$, the collection $\{\varphi(a_{i_1}a_{i_2}\dots a_{i_k}),\ i_1,\dots,i_k\in\{1,\dots,m\},\ k\in\mathbb{N}\}$ is called 
the joint moments of $\{a_1,\dots,a_m\}\subset\mathcal{A}$.
Let $\mathbb{C}\langle x_1,\dots,x_m\rangle$ denote the algebra of polynomials in non-commutative indeterminates $x_1,\dots, x_m$. The joint distribution 
of $a_1,\dots,a_m$ is a linear functional $\mu_{a_1,\dots,a_m}:\mathbb{C}\langle x_1,\dots,x_m\rangle\rightarrow\mathbb{C}$ given by:
\begin{equation*}
\mu_{a_1,\dots,a_m}(Q)=\varphi(Q(a_1,\dots,a_m)).
\end{equation*}
In the case of a $*$-probability space, the joint distribution is extended by including the starred variables in the collection of moments ($*$-moments), and allowing $*$-variables in $Q$.
Let $(\mathcal{A}_n,\varphi_n)$, $n\ge1$ be NCPs. The variables 
$\{a_{1,n},\dots,a_{m,n}\}\subset\mathcal{A}_n$ are said to (jointly) converge  if
$\lim_{n\rightarrow\infty}\varphi_n(Q(a_{1,n},\dots,a_{m,n}))
$ exists and is finite for all $Q\in\mathbb{C}\langle x_1,\dots,x_m\rangle.$
To verify the above condition, it is sufficient to check it for monomials only. Moreover, the limits canonically define a
limit NCP $(\mathcal{A}, \varphi)$, where $\mathcal{A}$ is generated by indeterminates $a_1, \ldots, a_m$ and $\varphi(Q(a_1,\dots,a_m))$ is defined as the above limit. We say 
$(a_{1,n},\dots,a_{m,n})$ converges to $(a_1, \ldots, a_m)$. If $a_1,\dots,a_m$ are free, then $a_{1,n},\dots, a_{m,n}$ are said to be asymptotically free. All these notions can be extended in an obvious way  
when we have $*$-probability spaces. We shall express $*$-convergence as $(a_{1,n},\dots,a_{m,n})\stackrel{*}{\to} (a_1, \ldots, a_m)$. 
\vskip5pt

\noindent\textbf{{Moments, cumulants and free cumulants}.} 
To establish $*$-convergence, it is often convenient to use relations between moments, cumulants, and free cumulants.  For $m\in\mathbb{N}$, let $\mathcal{P}(m)$ be the set of all \textit{partitions} of $\{1,2,\dots,m\}$ with the usual partial order $\leq$ where $\textbf{1}_n=\{1,\dots,m\}$ and $\textbf{0}_n=\{\{1\},\{2\},\dots,\{m\}\}$ are the largest and the smallest partitions, respectively.  For any $\pi=\{V_1,\dots,V_r\}\in\mathcal{P}(m)$, $1\leq r\leq m$, let 
 \[|\pi|:=r=\#\ \text{of blocks of}\ \ \pi, \ \text{and}\ \ |V_j| :=\# \ \text{number of elements in}\ \  V_j.
 \]
Let  $\{c_k\}_{k\ge1}$ be a sequence of complex numbers. Its multiplicative extension $\{c_\pi\}, \pi \in \mathcal{P}_m, m\geq 1$ is defined as
 \[c_\pi :=c_{|V_1|}\dots c_{|V_r|}, \ \text{when}\ \ \pi=\{V_1,\dots,V_r\}\in\mathcal{P}(m).
 \]
Let $(\Omega,\mathcal{F},\mathbb{P})$ be a probability space and  
let $Y$ be any real random variable on this space with all moments $M_k=\mathbb{E}Y^k<\infty$. Then there exists a unique sequence $c_k$, $k\ge1$, called the cumulant sequence, that satisfies the relations 
\begin{equation}\label{eq:classicalmoment}
    M_k=\sum_{\sigma\in\mathcal{P}(k)}c_\sigma, \text{and}\ \ 
    M_\pi=\sum_{\sigma\in\mathcal{P}(m):\sigma\leq \pi}c_\sigma\ \text{for all}\ \pi\in\mathcal{P}(k),\ m, k\ge1,
\end{equation}
where $\{c_\sigma\}$ and $\{M_{\pi}\}$ are the multiplicative extension of $\{c_k\}$ and $\{M_{k}\}$. The reverse relations can be formulated, but we shall not need them. It may be noted that 
(\ref{eq:classicalmoment}) also holds for variables in an NCP, and its appropriate extension holds for product moments of commuting variables in an NCP, where the classical joint moments are replaced by the joint $*$-moments of variables. 

Let $(\mathcal{A},\varphi)$ be an NCP. Then define the moment
functionals on $\mathcal{A}^k$ as:
\[	\varphi_{k}(a_1,\dots,a_k)=\varphi(a_1\cdots a_k), \ k \geq 1.
\]
The functionals $\{\varphi_k\}$ are multilinear. Their multiplicative extension is defined as follows.
Let $\pi=\{V_1,\dots,V_r\}\in \mathcal{P}(k)$, $1\leq r\leq k$,
where $V_j=\{i_{j1}, \ldots i_{j|V_{j}|}\}$ (elements written in ascending order), $ 1\leq j \leq r$ . Then 
\begin{equation}\label{mommultext}
	\varphi_{\pi}(a_1,\dots,a_k)=\varphi_{|V_{1}|}(a_{i_{11}},\dots,a_{i_{1|V_{1}|}})\cdots\varphi_{|V_r|}(a_{i_{r1}},\dots,a_{i_{r|V_r|}}).
\end{equation} 
Let $NC(k)\subset \mathcal{P}(k), \ k \geq 1$ be the set of  \textit{non-crossing partitions}. Free cumulant functionals are defined using only $\{\varphi_{\pi}, \pi \in NC(k), k \geq 1\}$. Since $NC(k)$ is a finite lattice, it has a M{\"o}bius function, denoted by a common notation $\mathrm{Mob}[\cdot,\cdot]$ for all $k$. For $k\in\mathbb{N}$, define the free cumulant functionals 
via $\mathrm{Mob}$ and $\{\varphi_{\pi}\}$ as:
\begin{equation}\label{cummomrelation}
	\kappa_k(a_1,\dots,a_k)\coloneqq\sum_{\pi\in NC(k)}\mathrm{Mob}[\pi,\textbf{1}_k]\ \varphi_{\pi}(a_1,\dots,a_k),\ k\in\mathbb{N}.
\end{equation}
For $a\in \mathcal{A}$, we write $\kappa_k(a)$ for $\kappa_k(a, \ldots, a)$.
Multiplicative extension $\{\kappa_{\pi}\}$
of $\{\kappa_k\}$ is defined similarly, and they satisfy the relation:
\begin{equation*}
    \kappa_\pi(a_1,\dots,a_k)=\sum_{\sigma\in NC(k): \sigma\leq \pi}\text{Mob}[\sigma,\pi]\varphi_\sigma(a_1,\dots,a_k), \ \pi\in NC(k), \ k \geq 1.
\end{equation*}
The inversion of the M{\"o}bius function gives
\begin{equation}\label{momcumrelation}
    \varphi_k(a_1,\dots,a_k)\coloneqq\varphi(a_1\dots a_k)=\sum_{\pi\in NC(k)}\kappa_\pi(a_1,\dots,a_k), \ k \geq 1,
\end{equation}
and more generally, 
\begin{equation}\label{momcumrelationext}
    \varphi_{\pi}(a_1,\dots,a_k)=\sum_{\sigma\in NC(k),\ \sigma \leq \pi }\kappa_\sigma(a_1,\dots,a_k), \ \pi \in NC(k), \ k \geq 1.
\end{equation}
It is known that self-adjoint  $\{a_i\}_{i\in I}\subset\mathcal{A}$ are free iff all their mixed free cumulants are $0$. That is, for all $k\geq 2$ and $i_1,\dots,i_k\in I$ such that at least two of these indices are distinct, we have
\begin{equation}\label{jfcum=0}
    \kappa_k(a_{i_1},\dots,a_{i_k})=0.
\end{equation} 
As a consequence, if $a_1,\dots,a_m$ are free then 
\begin{align}
    \varphi(a_1\cdots a_m)&=\varphi(a_1)\cdots\varphi(a_m)\label{fmomformula},\\ \kappa_k(a_1+\dots+a_m)&=\kappa_k(a_1)+\cdots+\kappa_k(a_m), \ k \geq 1.\label{fcumformula}
\end{align}
Natural extensions of these notions are available for non-self adjoint variables and for $*$-probability spaces. 
\vskip5pt

Let $(\mathcal{A},\varphi)$ be a $*$-probability space. A self-adjoint variable $z\in\mathcal{A}$ is called a standard Gaussian variable if $\mu_z$ is the standard Gaussian probability measure.
A self-adjoint variable $s\in\mathcal{A}$ is called a standard semi-circle variable if 
$\mu_s$ has the density 
\begin{equation*}
    f_s(x)=\frac{1}{2\pi}\sqrt{4-x^2},\ x\in[-2,2].
\end{equation*}
A self-adjoint variable $r_S\in\mathcal{A}$ is called a standard  symmetrized Rayleigh variable if 
$\mu_{r_S}$ 
has the density
\begin{equation*}
    f_{r_S}(x)=|x|e^{-x^2},\ x\in\mathbb{R}.
\end{equation*}

\vskip5pt

\noindent \textbf{CLT in NCP}. We shall need the following three clts, which are usually proved with the additional condition that the summands are identically distributed. Proofs for the slightly more general statement use (\ref{gholder}). 
 Part (i) is the well known free clt (see \cite{Nica2007}). Part (ii) is the moment version of the classical clt, and can be proved easily using the moment--cumulant relations. Part (iii) is proved in \cite{Bose2011}, and uses half-cumulants. 

\begin{theorem}\label{thm:generalclt} Suppose $(\mathcal{A}, \varphi)$ is an NCP. 
Let $\{a_i\}$ be self-adjoint variables such that, $\varphi(a_i)=0, \varphi(a_i^2)=1$, and  $\sup_i|\varphi(a_i^m)| < \infty$ for every positive integer $m$. 
Then the following hold for $S_n:=a_1+\cdots +a_n$.\vskip5pt

\noindent (i) If $\{a_i\}$ are free, then $S_n/\sqrt{n}\stackrel{*}{\to} s$.
\vskip5pt

\noindent (ii) If $\{a_i\}$ are independent, then $S_n/\sqrt{n}\stackrel{*}{\to} z$.
\vskip5pt

\noindent (iii) If $\{a_i\}$ are half-independent, then $S_n/\sqrt{n}
\stackrel{*}{\to} r_S$.
\vskip3pt

\noindent In other words, $\varphi(S_n/\sqrt{n})^m$ in (i), (ii) and (iii) respectively, equal $\varphi(s^m)+o(1)$, $\varphi(z^m)+o(1)$, $\varphi(r_S^m)+o(1)$.
\end{theorem}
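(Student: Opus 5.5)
The plan is to prove all three parts by the method of moments, expanding $\varphi(S_n^m)$ through the relevant cumulants and showing that only pair partitions survive. Since each $a_i$ is self-adjoint, $\varphi(a_i)=0$, $\varphi(a_i^2)=1$, and $\sup_i|\varphi(a_i^m)|\le C_m$ for every $m$, I would first record that every mixed moment $\varphi(a_{i_1}^{p_1}\cdots a_{i_k}^{p_k})$ of bounded total degree is bounded uniformly in the indices. This is where the generalized Hölder-type bound (\ref{gholder}) enters. By the moment--cumulant relation (\ref{cummomrelation}), which expresses $\kappa_k$ as a finite combination of the $\varphi_\pi$, it follows that $\sup_i|\kappa_k(a_i)|\le D_k<\infty$, and similarly for classical cumulants and half-cumulants. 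In addition, $\kappa_1(a_i)=0$ and $\kappa_2(a_i)=1$.

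For (i), freeness together with (\ref{jfcum=0}) and (\ref{fcumformula}) gives $\kappa_k(S_n/\sqrt n)=n^{-k/2}\sum_{i=1}^n\kappa_k(a_i)$. This equals $0$ for $k=1$ and $1$ for $k=2$, and is bounded by $D_k n^{1-k/2}\to0$ for $k\ge3$. Inserting this into (\ref{momcumrelation}) with all arguments equal to $S_n/\sqrt n$, only the non-crossing pair partitions contribute in the limit. Hence $\varphi((S_n/\sqrt n)^m)\to \#NC_2(m)$, which are the Catalan moments of $s$.

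Part (ii) goes the same way with classical cumulants. Independence makes the variables commute in the relevant moments, so mixed cumulants vanish and $c_k$ is additive. Relation (\ref{eq:classicalmoment}) then gives the limit $\#\mathcal P_2(m)=\varphi(z^m)$. The only care needed is that (\ref{eq:classicalmoment}) applies to $S_n$ inside an NCP when the summands are independent; this follows by expanding $\varphi(S_n^m)$ as a sum over index words, factorizing each word with independence, and regrouping.

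Part (iii) is the main obstacle, since half-independence has no additive cumulant of a single variable in the same clean form. Here I would expand $\varphi(S_n^m)=n^{-m/2}\sum_{i_1,\dots,i_m}\varphi(a_{i_1}\cdots a_{i_m})$ and group the words by the partition $\pi$ of $[m]$ induced by equal indices. A partition $\pi$ has $n^{|\pi|}(1+o(1))$ index choices. If $\pi$ has a singleton block, the moment vanishes by the half-independence factorization together with $\varphi(a_i)=0$. If all blocks have size at least $2$ and some block has size at least $3$, then $|\pi|<m/2$, so by the uniform bounds the contribution is $O(n^{|\pi|-m/2})\to0$. For pair partitions, the half-independence rules of \cite{Bose2011} say that $\varphi(a_{i_1}\cdots a_{i_m})$ equals $1$ or $0$ according to whether the pair partition is symmetric in the sense of half-independence, and is otherwise independent of the specific $a_i$ because $\varphi(a_i^2)=1$. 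The limit is therefore the count of such pair partitions, which is $k!$ for $m=2k$, the moment of $r_S$. I would quote this count and the pair-partition evaluation from \cite{Bose2011} rather than reprove them.

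In all three parts the limiting moments determine the measure uniquely, so the $*$-convergence statements follow.
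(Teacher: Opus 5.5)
Your proposal is correct, and it organizes the argument somewhat differently from the paper. The paper only sketches a proof. It quotes (i) from \cite{Nica2007}, says (ii) follows from the moment--cumulant relations, quotes (iii) from the half-cumulant proof in \cite{Bose2011}, and invokes (\ref{gholder}) to pass from identically distributed summands to summands with uniformly bounded moments. You instead use cumulant additivity for (i) and (ii). For (iii) you replace the half-cumulant argument by a direct expansion over index words grouped by kernel partition, citing \cite{Bose2011} only for the value of pair words.

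A real advantage of your route is that the appeal to (\ref{gholder}) in your first paragraph can be dropped entirely:
\begin{itemize}
\item In (i) and (ii) only the single-variable cumulants $\kappa_k(a_i)$ and $c_k(a_i)$ occur. These are polynomials in $\varphi(a_i^j)$, $j\le k$, so $\sup_i|\varphi(a_i^m)|<\infty$ bounds them directly.
\item In (iii), half-commutativity $a_ia_ja_k=a_ka_ja_i$ lets you permute letters among the odd positions and among the even positions. Hence a symmetric word in which $a_j$ occurs $k_j$ times at odd and $k_j$ times at even positions equals $\prod_j a_j^{2k_j}$, since squares are central. Its state is therefore $\prod_j\varphi(a_j^{2k_j})$, which the hypothesis bounds.
\end{itemize}
This matters because the paper states (\ref{gholder}) only for a tracial state, whereas the theorem is formulated for a general NCP. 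In exchange, the paper's half-cumulant route keeps (iii) formally parallel to (i) and (ii).

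Two small corrections. In (iii), a word containing a singleton vanishes because it is a non-symmetric monomial, which is the defining rule of half-independence; it is not due to a factorization with $\varphi(a_i)=0$. Also, your closing appeal to moment determinacy is unnecessary, since $*$-convergence of a self-adjoint variable is by definition convergence of its moments.
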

We will use the following lemma, 
to convert $*$-convergence results to weak convergence results. 

\begin{lemma}\label{lem:momentconv} Let $\{X_n\}$ be real valued random variables with probability measures $\{\mu_n\}$. Suppose for every $m=0, 1, \ldots$, 
$\mathbb{E}(X_n^m)$ converges to say $\beta_m$ (finite). Then there is
a probability measure $\mu$ whose moments are $\{\beta_m\}$. If $\{\beta_m\}$ determines a unique probability measure
$\mu$, then $\mu_n$ converges weakly to $\mu$. 
\end{lemma}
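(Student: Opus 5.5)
The plan is the standard two-step argument: tightness from bounded second moments, then Hamburger's moment problem via a diagonal subsequence. Since every moment sequence converges, $\sup_n \mathbb{E}(X_n^2)<\infty$. Chebyshev's inequality then gives tightness of $\{\mu_n\}$. By Prokhorov's theorem (or Helly's selection theorem plus tightness), some subsequence $\mu_{n_k}$ converges weakly to a probability measure $\mu$.

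The next step is to show that $\mu$ has moments $\{\beta_m\}$. Fix $m$. Since $\sup_k \mathbb{E}|X_{n_k}|^{2m}<\infty$ (use $\beta_{2m}$, noting even moments are nonnegative), the family $\{X_{n_k}^m\}$ is uniformly integrable. By Skorokhod representation, or directly by truncating with a bounded continuous cutoff $x^m\wedge L$ and letting $L\to\infty$, weak convergence plus uniform integrability gives $\int x^m\,d\mu=\lim_k \mathbb{E}(X_{n_k}^m)=\beta_m$. Fatou's lemma first guarantees that $\int |x|^{2m}d\mu<\infty$, so these moments are finite. This proves existence of $\mu$ with moments $\{\beta_m\}$.

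For the uniqueness part, assume $\{\beta_m\}$ determines $\mu$ uniquely. Every subsequence of $\{\mu_n\}$ is tight, so by the argument above it has a further subsequence converging weakly to some probability measure $\nu$ with moments $\{\beta_m\}$. By uniqueness $\nu=\mu$. Since every subsequence has a further subsequence converging to the same limit $\mu$, the full sequence $\mu_n$ converges weakly to $\mu$. This is the standard subsequence criterion for weak convergence, metrized for instance by the Lévy metric.

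The only step requiring care is passing moments to the limit along the weakly convergent subsequence, since $x^m$ is unbounded. This is handled by the uniform integrability that comes from the bounded $2m$-th moments, as above. Everything else is routine.
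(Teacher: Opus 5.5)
The paper gives no proof of this lemma; it is quoted as the standard method-of-moments fact, so there is no argument of the authors' to compare against. Your proof is the standard textbook argument, and it is correct:
\begin{itemize}
\item tightness from $\sup_n \mathbb{E}X_n^2<\infty$ and Chebyshev, then Prokhorov;
\item uniform integrability of $X_{n_k}^m$ from the bounded $2m$-th moments, to identify the moments of a subsequential limit;
\item the subsequence criterion when $\{\beta_m\}$ is determining.
\end{itemize}

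There are two cosmetic points. First, for odd $m$ the cutoff $x^m\wedge L$ is not bounded below, so it is not a bounded test function. Use the two-sided truncation $(-L)\vee(x^m\wedge L)$ instead, or simply the Skorokhod route you also mention. Second, despite your opening sentence, you never actually use Hamburger's theorem. Existence of $\mu$ comes from Prokhorov compactness, and that is what lets a single argument deliver both the existence claim and the convergence claim. Invoking Hamburger directly would settle existence alone: the Hankel matrices $(\beta_{i+j})$ are limits of positive semidefinite matrices. It would not by itself give weak convergence.
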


\section{NCP of random sums and products}\label{sec3}
Let $(\Omega,\mathcal{F},\mathbb{P})$ be a probability space. 
Then the collection of all 
discrete random variables taking values in $\mathbb{N}\cup\{0\}=\{0, 1, 2, \ldots\}$ whose all moments are finite 
is a $*$-probability space with the expectation $\mathbb{E}$ as the state. We write this space as $(\mathcal{G}, \mathbb{E})$. Let $(\mathcal{A},\varphi)$ be any other NCP (or $*$-probability space) which is (tensor) independent of $(\mathcal{G}, \mathbb{E})$. 

We now define a new NCP by combining the above two NCPs in the following ``independent'' way. Consider all random sums and products of elements from $\mathcal{A}$ defined in the natural way: for example, if $\{a_i, i \geq 1\}$ are elements from $\mathcal{A}$, and $N\in \mathcal{G}$
then,
\begin{equation}\label{sumprodalg}
    a_1+\cdots +a_N=\sum_{i=0}^\infty (a_1+\cdots +a_i)\mathbb{I}_{\{N=i\}}, \ \  a_1\cdots a_N=\sum_{i=0}^\infty (a_1 \cdots  a_i)\mathbb{I}_{\{N=i\}}.
\end{equation}
The empty sum and product in (\ref{sumprodalg}) are equal to $0$ and $1_\mathcal{A}$, respectively. 
The collection of sums and products of the above quantities
forms an algebra, and we denote it by $\tilde{\mathcal{A}}$. Note that quantities such as $(a_1+\cdots +a_N)/\sqrt{N}$ are well defined (we take $0/0=1$).  
Let us consider the following ``random linear functional (state)'' 
(which we continue to denote by $\varphi$), 
defined in the natural way:
\[\varphi(a_1+\cdots + a_N)=\sum_{i=0}^\infty \varphi(a_1+\cdots +a_i)\mathbb{I}_{\{N=i\}}.\] 
We continue to denote the corresponding ``random'' free cumulant by $\kappa$, and $\mathbb{E}\kappa$ will denote the expectation of $\kappa$ with respect to the distribution of the random variables involved.

Taking an expectation with respect to the random variables involved, we define a state  $\tilde{\varphi}\coloneqq\mathbb{E}\varphi$ on $\tilde{\mathcal{A}}$. The corresponding free cumulant will be denoted by $\tilde\kappa$. 
Clearly,
\begin{align}
    \tilde{\varphi}(a_1+\cdots+a_N)&\coloneqq\sum_{i=0}^{\infty}\mathbb{P}\{N=i\}\varphi(a_1+\cdots+a_i),\label{phitildes}\\
    \tilde{\varphi}(a_1\cdots a_N)&\coloneqq\sum_{i=0}^{\infty}\mathbb{P}\{N=i\}\varphi(a_1\cdots a_i).
\end{align}

We make the following observations about $\tilde\kappa$ and $\mathbb{E}\kappa$.

\begin{remark}
    \noindent (i) It turns out that 
 $\tilde{\kappa}\neq \mathbb{E}\kappa$. To see this, let $\tilde{a}_N$ be an element in the NCP $(\tilde{\mathcal{A}},\tilde{\varphi})$ of type either $a_1+\cdots+a_N$ or $a_1\cdots a_N$. Its $m$th free cumulant $\tilde\kappa$ is given by
\begin{align*}
    \tilde{\kappa}_m(\tilde{a}_N)=\sum_{\pi\in NC(m)}\text{Mob}[\pi,1_m]\prod_{V\in\pi}\tilde{\varphi}_{|V|}(\tilde{a}_N)=\sum_{\pi\in NC(m)}\text{Mob}[\pi,1_m]\prod_{V\in\pi}\mathbb{E}\varphi_{|V|}(\tilde{a}_N).
\end{align*}
As $\prod_{V\in\pi}\mathbb{E}\varphi_{|V|}(\tilde{a}_N)$ is in general not equal to $\mathbb{E}\prod_{V\in\pi}\varphi_{|V|}(\tilde{a}_N)$, it follows that $\tilde{\kappa}\neq \mathbb{E}\kappa$. \vskip5pt 

\noindent (ii) Let $a_1,a_2,\ldots\in\mathcal{A}$ be identically distributed free variables. Let us consider the following partial random sum:
$\mathcal{S}_N\coloneqq a_1+\dots+a_{N}$. 
Using (\ref{momcumrelation}) and (\ref{phitildes}), we have for every $m \geq 1$, 
    \begin{align}
\tilde{\varphi}(\mathcal{S}_N^m)&=\sum_{\pi\in NC(m)}\tilde{\kappa}_\pi(\mathcal{S}_N) 
        \nonumber\\ 
        &=\sum_{i=1}^{\infty}\mathbb{P}\{N=i\}\varphi[(a_1+\dots+a_i)^m]\label{rem11}\\
        &=\sum_{i=1}^{\infty}\mathbb{P}\{N=i\}\sum_{\pi\in NC(m)}\kappa_\pi(a_1+\dots+a_i)\nonumber\\
        &=\sum_{\pi\in NC(m)}\mathbb{E}\kappa_\pi(\mathcal{S}_N)\nonumber\\
    &=\sum_{\pi\in NC(m)}\mathbb{E} [N^{|\pi|}\kappa_\pi(a_1)],\nonumber
    \end{align}
    where the last equality holds since  $a_j$'s are free and identically distributed.
\end{remark}
 
\section{Random sums with non-random scaling}\label{sec4}
The following result is a joint approximation extension of 
Theorem \ref{prop:classical}. 
We omit its proof.
\begin{theorem} \label{thmjtcov} 
Let $\{N_n\}$ be non-negative integer-valued rvs
independent of iid rvs $\{Y_n\}$ with $0<\mathbb{E}[Y_n^2]<\infty$. Let $Z$ be a standard Gaussian rv. \vskip5pt

\noindent (i) Suppose $\mathbb{E}[Y_1]\neq0$, and 
as $n\rightarrow\infty$, $\sigma_n^2\rightarrow\infty$, then for $t_1,t_2\in\mathbb{R}$, we have 
\begin{equation*}
    \mathbb{E}\exp\Big[\iota t_1(\frac{S_{N_n}-\mathbb{E}[N_nY_1]}{\sigma_n})
    +\iota \delta_nt_2\tilde{N}_n\Big]=\mathbb{E}\exp[\iota\delta_n(t_1+t_2)\tilde{N}_n]\mathbb{E}\exp[\iota t_1\sqrt{1-\delta_n^2}Z]+o(1).
\end{equation*}
\noindent (ii) Suppose $\mathbb{E}[Y_1]=0$. 
If as $n\rightarrow\infty$, $\mathbb{E}[N_n]\rightarrow\infty$, and the probability measures of $\frac{N_n}{\mathbb{E}[N_n]}$ weakly converge to a probability measure $\mu$, then
\begin{equation*}
    \lim_{n\rightarrow\infty}\mathbb{E}\exp\Big[\iota t_1\frac{S_{N_n}}{\sigma_n}
    +\iota t_2\frac{N_n}{\mathbb{E}[N_n]}\Big]=\int_{0}^{\infty}\exp(-\frac{t_1^2}{2}x+\iota t_2x)\,\mu(\mathrm{d}x),\ t_1,t_2\in\mathbb{R}.
\end{equation*}
\end{theorem}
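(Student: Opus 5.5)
The plan is to condition on $N_n$ and use the independence of $\{N_n\}$ and $\{Y_n\}$. Write $\phi(t)=\mathbb{E}e^{\iota tY_1}$, $\mu=\mathbb{E}Y_1$, $\theta^2=\mathbb{V}\mathrm{ar}[Y_1]$, and $\psi(u)=\mathbb{E}e^{\iota u (Y_1-\mu)}$. Conditioning on $N_n=k$ turns the joint characteristic function into a single expectation over $N_n$:
\begin{equation*}
\mathbb{E}\exp\Big[\iota t_1\tfrac{S_{N_n}-\mu\mathbb{E}N_n}{\sigma_n}+\iota\delta_nt_2\tilde N_n\Big]
=\mathbb{E}\Big[\psi(t_1/\sigma_n)^{N_n}\exp\Big(\iota t_1\tfrac{\mu(N_n-\mathbb{E}N_n)}{\sigma_n}+\iota\delta_nt_2\tilde N_n\Big)\Big].
\end{equation*}
For $\mu>0$ we have $\mu(N_n-\mathbb{E}N_n)/\sigma_n=\delta_n\tilde N_n$, so the exponential factor equals $\exp[\iota\delta_n(t_1+t_2)\tilde N_n]$. (For $\mu<0$ the same holds with a sign convention on $\delta_n$.) It remains to replace $\psi(t_1/\sigma_n)^{N_n}$ by the deterministic quantity $\exp(-t_1^2\theta^2\mathbb{E}N_n/(2\sigma_n^2))=\mathbb{E}e^{\iota t_1\sqrt{1-\delta_n^2}Z}$. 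This identity holds because $1-\delta_n^2=\theta^2\mathbb{E}N_n/\sigma_n^2$.

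For (i) I would bound the replacement error as follows. All factors have modulus at most one, and $|a^k-b^j|\le |a^k-b^k|+|b^k-b^j|$ for $|a|,|b|\le 1$. Taking $b=e^{-t_1^2\theta^2/(2\sigma_n^2)}$, the error is at most
\[
\mathbb{E}|\psi(t_1/\sigma_n)^{N_n}-b^{N_n}|+\mathbb{E}|b^{N_n}-b^{\mathbb{E}N_n}|.
\]
The first term is at most $\mathbb{E}[N_n]\,|\psi(t_1/\sigma_n)-b|$. Since $\psi(u)=1-\theta^2u^2/2+o(u^2)$ and $b=1-\theta^2t_1^2/(2\sigma_n^2)+O(\sigma_n^{-4})$, this bound is $\mathbb{E}[N_n]\,o(\sigma_n^{-2})$. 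Because $\theta^2\mathbb{E}N_n\le\sigma_n^2$, it tends to $0$ when $\theta>0$. The case $\theta=0$ is trivial, since then $\psi\equiv1$ and $\delta_n=1$.

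The second term is at most $\frac{t_1^2\theta^2}{2\sigma_n^2}\mathbb{E}|N_n-\mathbb{E}N_n|\le \frac{t_1^2\theta^2}{2\sigma_n^2}\sqrt{\mathbb{V}\mathrm{ar}N_n}$, using $|e^{-x}-e^{-y}|\le|x-y|$. By the Remark, $\mathbb{V}\mathrm{ar}N_n=O(\sigma_n^2)$ when $\mu\ne0$. Hence this term is $O(\sigma_n^{-1})\to0$. This step, controlling the fluctuation of $N_n$ inside the Gaussian factor, is the main obstacle, and the Remark is exactly what resolves it.

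For (ii), $\mu=0$ gives $\sigma_n^2=\theta^2\mathbb{E}N_n$, and the joint characteristic function becomes $\mathbb{E}[\psi(t_1/\sigma_n)^{N_n}e^{\iota t_2N_n/\mathbb{E}N_n}]$. By the same first-term estimate, $\psi(t_1/\sigma_n)^{N_n}$ can be replaced by $\exp(-\tfrac{t_1^2}{2}N_n/\mathbb{E}N_n)$ with error $o(1)$. This uses $\mathbb{E}N_n\to\infty$ and $\mathbb{E}[N_n]\,o(\sigma_n^{-2})=o(1)$. What remains is $\mathbb{E}\,g(N_n/\mathbb{E}N_n)$ with $g(x)=\exp(-t_1^2x/2+\iota t_2x)$. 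This $g$ is bounded and continuous on $[0,\infty)$, so weak convergence to $\mu$ gives the stated limit $\int_0^\infty g\,d\mu$.
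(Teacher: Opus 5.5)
Your proof is correct. The paper omits the proof of this theorem. After Theorem~\ref{thm31} it only remarks that a second-moment condition suffices to approximate characteristic functions. So the relevant comparison is with its proof of the free analogue, Theorem~\ref{thm31}(i), and the skeleton is the same:
\begin{itemize}
\item split $S_{N_n}-\mathbb{E}[N_n]\mathbb{E}[Y_1]$ into the centered sum at the random index plus $\mathbb{E}[Y_1](N_n-\mathbb{E}N_n)$;
\item observe that the second piece is exactly $\delta_n\tilde N_n$;
\item replace the random index in the centered part by a deterministic one ($\lfloor\eta_n\rfloor$ in (\ref{pf11}), the exponent $\mathbb{E}N_n$ in $b^{\mathbb{E}N_n}$ for you).
\end{itemize}

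What differs is the cost of that replacement. Working with moments, the paper must control $\mathbb{E}|N_n-\eta_n|^k$ for every $k$, and so it needs (\ref{thm31cond1}). Working with characteristic functions, every factor has modulus at most one. Hence $|\psi^k-b^k|\le k|\psi-b|$ and $\mathbb{E}|N_n-\mathbb{E}N_n|\le\sqrt{\mathbb{V}\mathrm{ar}[N_n]}=O(\sigma_n)$ suffice, which is exactly the content of the paper's remark. Your Taylor comparison of $\psi(t_1/\sigma_n)$ with $b$ also proves the needed CLT directly. The paper's moment proof instead cites the free CLT.

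You should state the sign of $\delta_n$ as a requirement of the statement, not as an optional convention. With $\delta_n\ge0$ and $\mathbb{E}[Y_1]<0$, your computation produces $\delta_n(t_2-t_1)$ in place of $\delta_n(t_1+t_2)$, and the displayed identity then fails in general. For example, take $N_n\in\{n,2n\}$ with probabilities $1/3,2/3$ and $t_2=0$. Then $\delta_n\to1$, and for suitable $t_1$ the two sides tend to complex-conjugate, non-real values. So the statement must be read with the signed $\delta_n=\mathbb{E}[Y_1]\sqrt{\mathbb{V}\mathrm{ar}[N_n]}/\sigma_n$, as in the first expression of (\ref{deltadef}).

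Finally, rename one of your two uses of $\mu$: you use it both for $\mathbb{E}Y_1$ and for the limit law in (ii).
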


\begin{remark}
    Under conditions of Theorem \ref{thmjtcov}(i), weak convergence of 
$(\frac{S_{N_n}-\mathbb{E}[N_nY_1]}{\sigma_n}, \delta_n \tilde{N}_n)$ is equivalent to the convergence of $\delta_n$ and weak convergence of $\delta_n\tilde{N}_n$.
\end{remark}

We retain some notation from Theorem \ref{prop:classical} for the non-commutative case. Also, from now on, all rvs that appear are assumed to have finite moments of all orders.  
Recall the NCP $(\tilde{\mathcal{A}},\tilde{\varphi})$, defined in Section \ref{sec3}. Let $N_n$, $n\ge1$ be a non-negative integer valued rv with 
\begin{equation}\label{momsN}
    \eta_n :=\mathbb{E}N_n,\ \ \rho^2_n : =\mathbb{E}N_n^2\ \ \text{and}\ \ \gamma^2_n :=\mathbb{V}\mathrm{ar}(N_n)=\rho^2_n-\eta^2_n.
\end{equation} 
Let $a_1,a_2,\dots$ be free self-adjoint and identically distributed variables  with 
\begin{equation*}
    \alpha:=\varphi(a_1),\ \ \beta^2:=\varphi(a_1^2),\  \theta^2:=\kappa_2(a_1)=\beta^2-\alpha^2.
\end{equation*}
Let $\mathcal{S}_{N_n}\coloneqq a_1+\dots+a_{N_n}$ with $\mathcal{S}_0=0$. Then,
\begin{align}
    \tilde{\varphi}(\mathcal{S}_{N_n})&=\mathbb{E}\varphi(\mathcal{S}_{N_n})=\mathbb{E}N_n\varphi(a_1)=\eta_n\alpha,\label{fmomSN}\\
    \tilde{\varphi}(\mathcal{S}_{N_n}^2)&=\sum_{i=0}^{\infty}\mathbb{P}\{N_n=i\}\varphi[(a_1+\dots+a_i)^2]\nonumber\\
    &=\sum_{i=0}^{\infty}\mathbb{P}\{N_n=i\}\big[\sum_{j=1}^{i}\varphi(a_j^2)+\sum_{1\leq j\neq k\leq i}\varphi(a_j)\varphi(a_k)\big]\ \ \text{(using freeness)}\nonumber\\
    &=\sum_{i=0}^{\infty}\mathbb{P}\{N_n=i\}[i\beta^2+(i^2-i)\alpha^2]=\eta_n\theta^2+\rho_n^2\alpha^2,\label{smomSN}\\
    \sigma^2_n&:=\tilde{\kappa}_2(\mathcal{S}_{N_n})=\tilde{\varphi}(\mathcal{S}_{N_n}^2)-[\tilde{\varphi}(\mathcal{S}_{N_n})]^2=\eta_n\theta^2+\gamma_n^2\alpha^2.\label{tmomSN}
\end{align}
Define the following standardized variables:
\begin{equation*}
    \tilde{N}_n\coloneqq\frac{N_n-\eta_n}{\gamma_n}\ \ \text{and}\ \ \tilde{\mathcal{S}}_{N_n}\coloneqq \frac{\mathcal{S}_{N_n}-\eta_n\alpha}{\sigma_n}.
\end{equation*}
Let 
    \begin{equation}\label{deltadef}
        \delta_n\coloneqq\frac{\gamma_n \alpha}{\sigma_n}=\bigg(\frac{\gamma_n^2\alpha^2}{\eta_n\theta^2+\gamma_n^2\alpha^2}\bigg)^{1/2},\ 0\leq \delta_n\leq 1. 
    \end{equation}
    
Theorem \ref{thm31} is a free version of Theorem  \ref{thmjtcov}. Its proof is given in Section \ref{proof}. 

\begin{theorem}\label{thm31}
Let $\{a_i\}$ be free identically distributed self-adjoint variables in $(\mathcal{A}, \varphi)$.
Let $N_n$, $n\ge1$ be a rv, independent of $\{a_i\}$, and $s$ be a standard semi-circle variable, independent of $\tilde{N}_n\cdot 1_\mathcal{A}$.
\vskip5pt
\noindent    (i) Let $\varphi(a_1)\neq0$. Suppose as $n\rightarrow\infty$, 
    \begin{equation}\label{thm31cond1}
       \eta_n\rightarrow\infty,\ \sigma^2_n\rightarrow\infty\ \ \text{and} \ \ \mathbb{E}[|N_n-\eta_n|^m]=O(\sigma_n^{m})\  \ \text{for all}\ m\in\mathbb{N}.
    \end{equation} 
    Then,
    \begin{equation}\label{thm31re}
        \tilde{\varphi}(\tilde{\mathcal{S}}_{N_n}^m(\delta_n\tilde{N}_n)^l)=\tilde{\varphi}[(\delta_n\tilde{N}_n\cdot1_\mathcal{A}+\sqrt{1-\delta_n^2}s)^m(\delta_n\tilde{N}_n)^l]+o(1)\ \ \text{for all}\ \ m,l\in\mathbb{N}\cup\{0\}.
    \end{equation}
\noindent (ii)      
     Let $\varphi(a_1)=0$. Suppose as $n\rightarrow\infty$, $\eta_n\rightarrow\infty$ and
     \begin{equation}\label{thm31cond2}
         \limsup_{n\rightarrow\infty}\mathbb{E}\Big[(\frac{N_n}{\eta_n})^m\Big]<\infty\ \ \text{for all}\ m\in\mathbb{N}.
     \end{equation}
     Then, for each $m,l\in\mathbb{N}\cup\{0\}$, we have
     \begin{equation*}
         \tilde{\varphi}\Big[\tilde{S}_{N_n}^m(\frac{N_n}{\eta_n})^l\Big]=\mathbb{E}\Big[(\frac{N_n}{\eta_n})^{l+\frac{m}{2}}\Big]\varphi(s^m)+o(1).
     \end{equation*}
\end{theorem}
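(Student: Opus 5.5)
Conditioning on $N_n$ reduces both parts to moments of deterministic sums of free variables. Since $N_n$ is independent of $\{a_i\}$ and $N_n\cdot 1_\mathcal{A}$ is central in $\tilde{\mathcal{A}}$,
\begin{equation*}
\tilde{\varphi}\big(\tilde{\mathcal{S}}_{N_n}^m(\delta_n\tilde N_n)^l\big)=\mathbb{E}\Big[(\delta_n\tilde N_n)^l\,\varphi\Big(\Big(\frac{\mathcal{S}_{N_n}-\eta_n\alpha}{\sigma_n}\Big)^m\Big)\Big],
\end{equation*}
where the inner $\varphi$ is the random state of Section \ref{sec3}. On $\{N_n=i\}$ I decompose
\begin{equation*}
\mathcal{S}_i-\eta_n\alpha=\alpha(i-\eta_n)1_\mathcal{A}+\sum_{j=1}^i(a_j-\alpha).
\end{equation*}
The centred sum $T_i:=\sum_{j\le i}(a_j-\alpha)$ has free cumulants $\kappa_1(T_i)=0$ and $\kappa_k(T_i)=i\,\kappa_k(a_1)$ for $k\ge2$, by (\ref{fcumformula}).

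Since the scalar part commutes with $T_i$, the binomial theorem gives
\begin{equation*}
\varphi\big((\mathcal{S}_i-\eta_n\alpha)^m\big)=\sum_{r=0}^m\binom{m}{r}\alpha^{m-r}(i-\eta_n)^{m-r}\varphi(T_i^r).
\end{equation*}
By (\ref{momcumrelation}),
\begin{equation*}
\varphi(T_i^r)=\sum_{\pi\in NC(r),\ \text{no singletons}}i^{|\pi|}\kappa_\pi(a_1).
\end{equation*}
The dominant terms are the non-crossing pair partitions, which contribute $i^{r/2}\theta^r\varphi(s^r)$. Every other $\pi$ has $|\pi|\le (r-1)/2$ and contributes $O(i^{(r-1)/2})$.

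\textbf{Part (i).} Divide by $\sigma_n^m$ and write $i=N_n$. The leading part is
\begin{equation*}
\sum_r\binom{m}{r}(\delta_n\tilde N_n)^{m-r}\Big(\frac{\theta^2N_n}{\sigma_n^2}\Big)^{r/2}\varphi(s^r),
\end{equation*}
using $\alpha(N_n-\eta_n)/\sigma_n=\delta_n\tilde N_n$. Now $\theta^2\eta_n/\sigma_n^2=1-\delta_n^2$, and
\begin{equation*}
\frac{\theta^2N_n}{\sigma_n^2}=(1-\delta_n^2)\Big(1+\frac{N_n-\eta_n}{\eta_n}\Big).
\end{equation*}
I have to show three things after multiplying by $(\delta_n\tilde N_n)^l$ and taking $\mathbb{E}$.
\begin{enumerate}[label=(\alph*)]
\item Replacing $(N_n/\eta_n)^{r/2}$ by $1$ costs $o(1)$.
\item The subleading partitions contribute $\mathbb{E}\big[|\delta_n\tilde N_n|^{m-r+l}N_n^{(r-1)/2}\big]/\sigma_n^{r}\to0$.
\item The moments $\mathbb{E}|\delta_n\tilde N_n|^k=|\alpha|^k\,\mathbb{E}|N_n-\eta_n|^k/\sigma_n^k$ are $O(1)$, which is exactly hypothesis (\ref{thm31cond1}).
\end{enumerate}

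For (b) I use $N_n^{(r-1)/2}\le C\big(\eta_n^{(r-1)/2}+|N_n-\eta_n|^{(r-1)/2}\big)$ and Hölder (\ref{gholder}). Then $\eta_n\le\sigma_n^2/\theta^2$ gives a bound of order $\sigma_n^{r-1}/\sigma_n^r\to0$. The $|N_n-\eta_n|^{(r-1)/2}$ piece is $O(\sigma_n^{(r-1)/2})$, which is even smaller.

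For (a) I use $|x^{r/2}-1|\le C|x-1|(1+x^{r/2})$ and Cauchy--Schwarz. This reduces the step to showing that $\mathbb{E}|N_n-\eta_n|^2/\eta_n^2\cdot(1-\delta_n^2)^{r}\to0$ along with higher moments. The main obstacle sits here: $\gamma_n$ need not be $o(\eta_n)$. The remedy is that $(1-\delta_n^2)=\theta^2\eta_n/\sigma_n^2$, so
\begin{equation*}
(1-\delta_n^2)\frac{|N_n-\eta_n|}{\eta_n}=\theta^2\frac{|N_n-\eta_n|}{\sigma_n^2}=O_{L^p}(\sigma_n^{-1})\to0,
\end{equation*}
again by (\ref{thm31cond1}). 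Since $r\ge1$ whenever this replacement matters, a factor $(1-\delta_n^2)^{1/2}$ is always available to absorb the error. The case where this factor is small relative to the error needs care: I would treat separately the indicator of $\{N_n\le 2\eta_n\}$, where $|x^{r/2}-1|\le C|x-1|$, and its complement, bounded by Markov's inequality with high moments. Finally, the limiting expression
\begin{equation*}
\sum_r\binom{m}{r}(\delta_n\tilde N_n)^{m-r}(1-\delta_n^2)^{r/2}\varphi(s^r)
\end{equation*}
is exactly the binomial expansion of the right side of (\ref{thm31re}). This uses the independence of $s$ and $\tilde N_n 1_\mathcal{A}$, and the fact that they commute.

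\textbf{Part (ii).} Here $\alpha=0$ and $\sigma_n^2=\eta_n\theta^2$. Only the $r=m$ term survives, so
\begin{equation*}
\varphi(\tilde{\mathcal{S}}_{N_n}^m)=(N_n/\eta_n)^{m/2}\varphi(s^m)+\mathbb{E}\text{-negligible terms},
\end{equation*}
where I take $\theta=1$ after rescaling. The remainder terms are $O(N_n^{(m-1)/2}/\eta_n^{m/2})$. Multiplied by $(N_n/\eta_n)^l$, they have expectation $O\big(\eta_n^{-1/2}\,\mathbb{E}(N_n/\eta_n)^{l+(m-1)/2}\big)\to0$ by (\ref{thm31cond2}) and Lyapunov's inequality. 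This gives the stated formula.
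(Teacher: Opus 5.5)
Your proposal is correct and is essentially the paper's argument: condition on $N_n$, split $\mathcal{S}_i-\eta_n\alpha=\alpha(i-\eta_n)1_\mathcal{A}+\mathcal{S}_i'$, expand the centred part in free cumulants with $\kappa_k(\mathcal{S}_i')=i\,\kappa_k(a_1)$, and dispose of the non-pair partitions and of the $N_n$-versus-$\eta_n$ discrepancy using $\eta_n\le\sigma_n^2/\theta^2$ and $\mathbb{E}|N_n-\eta_n|^k=O(\sigma_n^k)$. The only organisational difference is that the paper compares with the deterministic index $\lfloor\eta_n\rfloor$ and then invokes the free CLT, whereas you count non-crossing pair partitions directly; in your step (a) the case split is unnecessary, because $\varphi(s^r)=0$ for odd $r$ makes $(N_n/\eta_n)^{r/2}$ an integer power with $r/2\ge1$, so expanding $N_n^{r/2}=(N_n-\eta_n+\eta_n)^{r/2}$ binomially (as in (\ref{pf1c1})) gives error terms $(1-\delta_n^2)^{r/2-j}\bigl(\theta^2(N_n-\eta_n)/\sigma_n^2\bigr)^j=O_{L^p}(\sigma_n^{-j})$, $j\ge1$, directly.
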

\vskip5pt
Theorem \ref{thmjtcov} 
required only the second moment condition, $\mathbb{E}[(N_n-\eta_n)^2]=O(\sigma_n^2)$, enough to approximate characteristic functions.  
It seems natural that the joint $*$-moment approximation of $\{\tilde{\mathcal{S}}_{N_n},\tilde{N}_n\}$ in Theorem \ref{thm31} would require an appropriate all order moments condition, namely (\ref{thm31cond1}). 

\begin{remark}
Let $\alpha_n$ denote the probability distribution of $\delta_n\tilde{N}_n$. Suppose $a_1$ has a probability measure. Then for every $n$, $\sqrt{1-\delta_n^2}(a_1+\dots+a_{n}-\alpha n)/\sqrt{n}\theta$ has a probability measure, $\mu_n$ say. 
It then implies that 
$\tilde{\mathcal{S}}_{N_n}$ has a probability measure, $\nu_n$ say.
From Theorem \ref{thm31}, it follows that for large $n\rightarrow\infty$, $\nu_n$ is approximated by the standard additive convolution of $\mu_n$ and $\alpha_n$. 
\end{remark}

The following corollary is an immediate consequence of 
Theorem \ref{thm31}(i).
\begin{corollary}\label{corr41}
    \noindent Under conditions of Theorem \ref{thm31}(i), $*$-convergence of 
$\{\tilde{\mathcal{S}}_{N_n}, \delta_n\tilde{N}_n\cdot1_\mathcal{A}\}$ is equivalent to the convergence of $\delta_n$ and moments of $\delta_n\tilde{N}_n$. Further, if $\delta_n\to \delta\neq0$, then the $*$-convergence above is equivalent to the 
convergence of moments of $\tilde{N}_n$.
\end{corollary}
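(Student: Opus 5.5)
The corollary follows from Theorem \ref{thm31}(i) together with an expansion of the right side of (\ref{thm31re}). Since $s$ is independent of $\tilde N_n\cdot 1_\mathcal{A}$, and $\tilde N_n\cdot 1_\mathcal{A}$ commutes with everything, the binomial theorem gives, for all $m,l\ge0$,
\begin{equation*}
\tilde{\varphi}\big[(\delta_n\tilde{N}_n+\sqrt{1-\delta_n^2}\,s)^m(\delta_n\tilde{N}_n)^l\big]=\sum_{k=0}^{m}\binom{m}{k}(1-\delta_n^2)^{k/2}\varphi(s^k)\,\mathbb{E}\big[(\delta_n\tilde N_n)^{m-k+l}\big].
\end{equation*}
Write $\mu_{j,n}:=\mathbb{E}[(\delta_n\tilde N_n)^j]$. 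Because $\tilde{\mathcal S}_{N_n}$ and $\delta_n\tilde N_n\cdot 1_\mathcal{A}$ commute (the second is a scalar random variable), every joint monomial reduces to the form $\tilde\varphi(\tilde{\mathcal S}_{N_n}^m(\delta_n\tilde N_n)^l)$. So $*$-convergence of the pair is equivalent to convergence of these numbers for all $m,l$. By (\ref{thm31re}), this is in turn equivalent to convergence of the finite sums above.

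\emph{Sufficiency.} If $\delta_n\to\delta$ and $\mu_{j,n}$ converges for every $j$, each displayed sum is a finite combination of convergent quantities. Note that $(1-\delta_n^2)^{k/2}$ is continuous in $\delta_n\in[0,1]$. So the sums converge, and by (\ref{thm31re}) the joint $*$-moments converge.

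\emph{Necessity.} Taking $m=0$ gives convergence of $\mu_{l,n}$ for all $l$. Taking $m=2,l=0$, and using $\varphi(s)=0$ and $\varphi(s^2)=1$, the sum equals $\mu_{2,n}+(1-\delta_n^2)$. Now $\mu_{2,n}=\delta_n^2\,\mathbb{E}\tilde N_n^2=\delta_n^2$, since $\tilde N_n$ is standardized. Hence the sum is identically $1$ and carries no information. This is the main obstacle, and I would get $\delta_n$ directly: $\mu_{2,n}=\delta_n^2$ converges by the $m=0$ case, so $\delta_n^2$ converges, and since $\delta_n\ge0$, $\delta_n$ converges. Thus the joint $*$-convergence forces convergence of $\delta_n$ and of all moments of $\delta_n\tilde N_n$. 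A side remark is that in fact only the moments of $\delta_n\tilde N_n$ are needed, because they already determine $\delta_n$.

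For the last claim, suppose $\delta_n\to\delta\ne0$. Then $\delta_n$ is bounded away from $0$ for large $n$, and
\begin{equation*}
\mathbb{E}\tilde N_n^j=\delta_n^{-j}\mu_{j,n},
\end{equation*}
so the moments of $\tilde N_n$ converge if and only if those of $\delta_n\tilde N_n$ do, given $\delta_n\to\delta$. Combined with the first part, this yields the stated equivalence.
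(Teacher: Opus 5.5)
Your proof is correct and follows the route the paper intends: the paper states this corollary as an immediate consequence of Theorem \ref{thm31}(i) without further detail, and you supply the missing details, namely the reduction of joint monomials to $\tilde{\varphi}(\tilde{\mathcal{S}}_{N_n}^m(\delta_n\tilde N_n)^l)$ by commutativity, the binomial expansion of the right side of (\ref{thm31re}) using the independence of $s$ and $\tilde N_n\cdot 1_\mathcal{A}$, and division by $\delta_n^j$ for the last claim. Your extra observation is also correct: since $\mathbb{E}[(\delta_n\tilde N_n)^2]=\delta_n^2$, convergence of $\delta_n$ already follows from convergence of the moments of $\delta_n\tilde N_n$, which slightly sharpens the first equivalence.
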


Our next corollary yields a free clt for the random sum.
\begin{corollary}\label{crr31}
    Suppose $\alpha\neq 0$, (\ref{thm31cond1}) holds, and as $n\rightarrow\infty$,
    \begin{equation}\label{pf21}
        \gamma_n^2=o(\eta_n).
    \end{equation}
    Then,
$\tilde{\mathcal{S}}_{N_n}\stackrel{*}{\to}s$. 
If $\tilde{\mathcal{S}}_{N_n}$ have probability measures $\mu_n$, then they converge weakly to $\mu_s$. 
\end{corollary}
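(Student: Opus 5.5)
The plan is to obtain Corollary \ref{crr31} as a direct consequence of Theorem \ref{thm31}(i) with $l=0$. The first step is to show that $\delta_n\to0$. By (\ref{deltadef}),
\[
\delta_n^2=\frac{\gamma_n^2\alpha^2}{\eta_n\theta^2+\gamma_n^2\alpha^2}\le \frac{\alpha^2}{\theta^2}\cdot\frac{\gamma_n^2}{\eta_n}\to0
\]
by (\ref{pf21}), provided $\theta^2>0$. If $\theta^2=0$, then $a_1$ is degenerate, $\sigma_n^2=\gamma_n^2\alpha^2$ and $\delta_n=1$; this case is excluded because $\sigma_n^2\to\infty$ together with $\gamma_n^2=o(\eta_n)$ forces $\eta_n\theta^2$ to be the dominant part of $\sigma_n^2$. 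I will note explicitly that $\theta^2>0$ is implicitly assumed. This also gives $\sigma_n^2\sim\eta_n\theta^2$.

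Next I apply (\ref{thm31re}) with $l=0$:
\[
\tilde\varphi(\tilde{\mathcal S}_{N_n}^m)=\tilde\varphi\big[(\delta_n\tilde N_n+\sqrt{1-\delta_n^2}\,s)^m\big]+o(1).
\]
Since $\tilde N_n\cdot1_\mathcal{A}$ commutes with $s$ and is independent of it, I expand binomially:
\[
\sum_{k=0}^m\binom{m}{k}\delta_n^k(1-\delta_n^2)^{(m-k)/2}\,\mathbb{E}[\tilde N_n^k]\,\varphi(s^{m-k}).
\]
The key bound is $\mathbb{E}|\tilde N_n|^k=\gamma_n^{-k}\mathbb{E}|N_n-\eta_n|^k$. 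Hence
\[
\delta_n^k\,\mathbb{E}|\tilde N_n|^k=\frac{\alpha^k\,\mathbb{E}|N_n-\eta_n|^k}{\sigma_n^k}=O(1)
\]
by (\ref{thm31cond1}). This alone only gives boundedness, so I need an extra factor of $\delta_n$. I plan to use Hölder/Lyapunov: for $k\ge1$,
\[
\delta_n^k\,\mathbb{E}|\tilde N_n|^k=\delta_n\cdot\delta_n^{k-1}\,\mathbb{E}|\tilde N_n|^k .
\]
By Hölder with exponent $2k$,
\[
\mathbb{E}|\tilde N_n|^k\le(\mathbb{E}\tilde N_n^{2k})^{1/2},
\]
and $\delta_n^{2k}\mathbb{E}\tilde N_n^{2k}=O(1)$, so $\delta_n^k(\mathbb{E}\tilde N_n^{2k})^{1/2}=O(1)$. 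That still does not produce a decaying factor.

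A cleaner route is to use $\delta_n\to0$ directly. Write $\delta_n\tilde N_n=\alpha(N_n-\eta_n)/\sigma_n$. Then
\[
\mathbb{E}|\delta_n\tilde N_n|^k\le\big(\mathbb{E}|\delta_n\tilde N_n|^{2k}\big)^{1/2}.
\]
By Lyapunov's inequality with the second moment, $\mathbb{E}(\delta_n\tilde N_n)^2=\delta_n^2\to0$. Combining this with boundedness of all higher moments via Hölder,
\[
\mathbb{E}|X|^k\le(\mathbb{E}X^2)^{1/2}(\mathbb{E}|X|^{2k-2})^{1/2}
\]
for $X=\delta_n\tilde N_n$, I get $\mathbb{E}|X|^k\le\delta_n\cdot O(1)\to0$ for every $k\ge1$. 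Therefore only the $k=0$ term survives, and it tends to $\varphi(s^m)$ since $(1-\delta_n^2)^{m/2}\to1$. This gives $\tilde{\mathcal S}_{N_n}\stackrel{*}{\to}s$.

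The weak convergence statement then follows from Lemma \ref{lem:momentconv}, because the semicircle law is compactly supported and hence determined by its moments. The main obstacle is the step that kills the cross terms; the Cauchy–Schwarz split into the second moment, which vanishes, and a higher moment, which is bounded, is what makes it work.
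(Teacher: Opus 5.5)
Your proof is correct and follows the paper's route. You show $\delta_n\to0$, apply (\ref{thm31re}) with $l=0$, show that all moments of $\delta_n\tilde N_n$ vanish, and finish with Lemma \ref{lem:momentconv}. The one difference is in the vanishing-moments step. The paper argues via $\delta_n\tilde N_n\stackrel{\mathbb{P}}{\to}0$ together with uniform integrability coming from (\ref{thm31cond1}). Your Cauchy--Schwarz bound $\mathbb{E}|X|^k\le(\mathbb{E}X^2)^{1/2}(\mathbb{E}|X|^{2k-2})^{1/2}=O(\delta_n)$ uses the same two ingredients in one line and also gives a rate.

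One correction concerns your side remark that $\sigma_n^2\to\infty$ and $\gamma_n^2=o(\eta_n)$ rule out $\theta^2=0$. That claim is false. Take $a_i=\alpha1_\mathcal{A}$ with $\alpha\neq0$, and $N_n=n^2+P_n$ with $P_n$ Poisson of mean $n$. Then $\eta_n=n^2+n$, $\gamma_n^2=n=o(\eta_n)$, and $\sigma_n^2=\alpha^2 n\to\infty$, and all centered moment bounds in (\ref{thm31cond1}) hold. But $\delta_n=1$, and $\tilde{\mathcal{S}}_{N_n}=\pm(P_n-n)/\sqrt{n}$ converges to a Gaussian, not to $s$. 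So $\theta^2>0$ is a genuine extra hypothesis. The paper assumes it tacitly: its proof of Theorem \ref{thm31}(i) normalizes by $\sqrt{\eta_n}\theta$, and its deduction $\delta_n\to0$ needs it. Drop the erroneous justification and keep your explicit assumption $\theta^2>0$.
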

\begin{proof}
    Using (\ref{pf21}) in (\ref{deltadef}), it follows that $\lim_{n\rightarrow\infty}\delta_n=0$. For the first part, in view of (\ref{thm31re}), it is enough to show that $\lim_{n\rightarrow\infty}\mathbb{E}[\delta_n\tilde{N}_n]^m=0$ for all $m\in\mathbb{N}$. Note that $\mathbb{E}[\delta_n\tilde{N}_n]=0$ and $\mathbb{V}\mathrm{ar}(\delta_n\tilde{N}_n)=\delta_n^2\rightarrow0$ as $n\rightarrow\infty$. Hence, 
    $\delta_n\tilde{N}_n\stackrel{\mathbb{P}}{\to} 0$. 
 Moreover, using the 
    third condition of (\ref{thm31cond1}), for a finite constant $C>0$, for each $m$, we get
    \begin{equation*}
        \mathbb{E}[|\delta_n\tilde{N}_n|^m]= 
        \frac{\delta_n^m}{\gamma_n^m} [\mathbb{E}|{N_n-\eta_n}|^{m}]=
        \frac{O(\sigma_n^m)}{\sigma_n^m}\alpha^m\leq C\alpha^m<\infty.
    \end{equation*}
    Thus, the sequence $\{(\delta_n\tilde{N}_n)^m\}$ is uniformly integrable for each $m\in\mathbb{N}$. 
    Hence, all its moments converge to $0$. This proves the first part. The second part now follows instantly from Lemma \ref{lem:momentconv}, since all moments of $\tilde{\mathcal{S}}_{N_n}$ converge to the moments of a semi-circle measure, which is compactly supported, and hence uniquely determined by its moments. 
\end{proof}

Now we consider the situation where 
$\tilde{N}_n$ converges weakly.
\begin{corollary}\label{corr32}
    Suppose $\alpha\neq0$, 
    $\tilde{N}_n$ converges weakly to $N$ with  
    probability measure $\mu_N$, 
    and $\{\tilde{N}_n^m\}$ is uniformly integrable for all $m\in\mathbb{N}$. 
    Then, as $n\rightarrow\infty$, 
    \begin{equation*}
        \tilde{\varphi}(\tilde{\mathcal{S}}_{N_n}^m)=\varphi[(\delta_na+\sqrt{1-\delta_n^2}s)^m]+o(1), \ \text{for each}\ \ m\in \mathbb{N}.
    \end{equation*}
    where $a$ is a self-adjoint variable whose all moments agree with those of $\mu_N$, and $s$ is a standard semi-circle variable, independent of $a$. 
 Further, if $\mu_a$ exists, then for every $n$, a probability law for  $\delta_na\cdot1_\mathcal{A}+\sqrt{1-\delta_n^2}s$ also exists. 
\end{corollary}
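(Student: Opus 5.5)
We will derive Corollary \ref{corr32} from Theorem \ref{thm31}(i) with $l=0$. That theorem gives
\[
\tilde{\varphi}(\tilde{\mathcal{S}}_{N_n}^m)=\tilde{\varphi}\big[(\delta_n\tilde{N}_n\cdot1_\mathcal{A}+\sqrt{1-\delta_n^2}\,s)^m\big]+o(1).
\]
Since $\tilde{N}_n\cdot1_\mathcal{A}$ is a scalar multiple of the unit inside each $\varphi$ and $s$ is independent of it, we can expand binomially:
\[
\tilde{\varphi}\big[(\delta_n\tilde{N}_n+\sqrt{1-\delta_n^2}\,s)^m\big]=\sum_{k=0}^m\binom{m}{k}\delta_n^k(1-\delta_n^2)^{(m-k)/2}\,\mathbb{E}[\tilde{N}_n^k]\,\varphi(s^{m-k}).
\]
Similarly, $\varphi[(\delta_na+\sqrt{1-\delta_n^2}s)^m]$ equals the same sum with $\mathbb{E}[\tilde N_n^k]$ replaced by $\varphi(a^k)=\int x^k\,\mu_N(dx)$; here classical independence is used, which is legitimate because $a$ commutes with $s$ in the limiting construction. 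Since $0\le\delta_n\le1$, the coefficients $\binom{m}{k}\delta_n^k(1-\delta_n^2)^{(m-k)/2}\varphi(s^{m-k})$ are bounded uniformly in $n$. It therefore suffices to show that $\mathbb{E}[\tilde N_n^k]\to\int x^k\,d\mu_N$ for each $k\le m$. This follows from weak convergence together with the uniform integrability of $\{\tilde N_n^k\}$, via the standard Vitali / Skorokhod argument.

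Before this, the hypotheses of Theorem \ref{thm31}(i) must be checked. We need (\ref{thm31cond1}), and this is the main obstacle, since the corollary does not explicitly assume $\eta_n\to\infty$ or $\sigma_n\to\infty$. The plan is as follows. First, uniform integrability of $\{\tilde N_n^m\}$ gives $\sup_n\mathbb{E}|N_n-\eta_n|^m/\gamma_n^m<\infty$. Second, $\gamma_n^2\alpha^2\le\sigma_n^2$ gives $\gamma_n=O(\sigma_n)$, so $\mathbb{E}|N_n-\eta_n|^m=O(\gamma_n^m)=O(\sigma_n^m)$. For the divergence requirements $\eta_n\to\infty$ and $\sigma_n^2\to\infty$, we will read them as standing assumptions inherited from the setting of Theorem \ref{thm31}(i), in the same way that Corollary \ref{crr31} assumes (\ref{thm31cond1}). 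If they are not assumed, one can argue as follows: a nondegenerate weak limit $N$ with variance one (from uniform integrability) forces $\gamma_n$ to stay away from zero. We will nonetheless state the divergence as inherited from the theorem's setting rather than attempt to derive it.

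For the final assertion, suppose $\mu_a$ exists. Then $\delta_na\cdot1_\mathcal{A}+\sqrt{1-\delta_n^2}\,s$ has as its moments those of the classical convolution of $\mu_a$ scaled by $\delta_n$ with the semicircle law scaled by $\sqrt{1-\delta_n^2}$. This follows from the binomial expansion above: the moments factor exactly as for a sum of independent classical random variables with laws $\mu_{\delta_na}$ and $\mu_{\sqrt{1-\delta_n^2}s}$. Hence that convolution measure serves as a probability law. Uniqueness is not claimed, only existence, so no determinacy argument is needed.
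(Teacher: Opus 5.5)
Your proposal is correct and follows essentially the same route as the paper. Like the paper, you verify the third condition of (\ref{thm31cond1}) from uniform integrability together with $\alpha^2\gamma_n^2\le\sigma_n^2$, apply Theorem \ref{thm31}(i) with $l=0$, replace the moments of $\tilde N_n$ by those of $N$ (a step the paper leaves implicit), and handle the final claim via the weighted classical convolution of $\mu_a$ and $\mu_s$.

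You are also right that $\eta_n\to\infty$ and $\sigma_n^2\to\infty$ must be carried as standing assumptions. The paper uses them silently, and they cannot be derived from the stated hypotheses: take $N_n$ with a fixed two-point law. So your aside about $\gamma_n$ staying away from zero does not help and can simply be dropped.
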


\begin{proof} 
By uniform integrability of all powers of $\tilde{N}_n$, all moments of $N$ are finite. In addition, using $\alpha^2\gamma_n^2\leq \sigma_n^2$, for all $k$,
    \begin{equation*}
        \mathbb{E}[|N_n-\eta_n|^k]=\gamma_n^k\mathbb{E}|N|^k+o(\gamma_n^k)\leq \frac{\sigma_n^k}{|\alpha|^k}[\mathbb{E}|N|^k+o(1)].
    \end{equation*}
     Thus, the third condition in (\ref{thm31cond1}) holds.
    This completes the proof of the first part. Now note that $\mu_s$ exists. Thus, if $\mu_a$ 
    exists, then the required limiting probability measure is the (weighted) additive convolution of $\mu_a$ and $\mu_s$. 
\end{proof}

\begin{corollary}\label{corr33}
    Suppose (\ref{thm31cond1}) holds, $\alpha\neq 0$, and 
    $\tilde{N}_n$ has a limiting 
     probability measure $\mu$. 
    If the following limit:
    \begin{equation}\label{crr33}
        \lim_{n\rightarrow\infty}\frac{\eta_n\theta^2}{\alpha^2\gamma_n^2}\coloneqq\zeta\in[0,\infty),
    \end{equation}
    exist, then $\tilde{\mathcal{S}}_{N_n}\stackrel{*}{\to}
    (1+\zeta)^{-1/2}a+\sqrt{\zeta/(1+\zeta)}s$, which is self-adjoint, and $s$ and $a$ are independent, 
    and $a$ is a self-adjoint variable whose $m$th moment is given as
    $\varphi(a^m)=\int_{\mathbb{R}}x^m\,\mathrm{d}\mu(\mathrm{d}x)$, $m\in\mathbb{N}$. 
    Moreover, if 
    $\tilde{\mathcal{S}}_{N_n}$ and $(1+\zeta)^{-1/2}a+\sqrt{\zeta/(1+\zeta)}s$ have probability measures $\nu_n$, and $\nu$, 
    then as $n \to \infty$, $\nu_n$ converges weakly to $\nu$.
    \end{corollary}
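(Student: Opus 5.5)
We apply Theorem \ref{thm31}(i) with $l=0$ and then pass to the limit in $n$, showing that the right-hand side of (\ref{thm31re}) converges to the moments of the proposed limit.

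\textbf{Step 1: identify $\delta$.} From (\ref{deltadef}),
\[
\delta_n^2=\frac{1}{1+\eta_n\theta^2/(\alpha^2\gamma_n^2)},
\]
so (\ref{crr33}) gives $\delta_n\to\delta:=(1+\zeta)^{-1/2}$ and $1-\delta_n^2\to \zeta/(1+\zeta)$.

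\textbf{Step 2: moments of $\tilde N_n$.} We need $\mathbb{E}[\tilde N_n^k]\to\int x^k\,\mu(dx)$ for every $k$. The hypothesis only gives weak convergence $\tilde N_n\Rightarrow\mu$, so we upgrade it using (\ref{thm31cond1}). Since $\alpha\neq0$ and $\zeta<\infty$, we have $\sigma_n^2=\alpha^2\gamma_n^2(1+\eta_n\theta^2/(\alpha^2\gamma_n^2))=O(\gamma_n^2)$. Combined with $\mathbb{E}|N_n-\eta_n|^{m}=O(\sigma_n^m)$, this gives
\[
\sup_n\mathbb{E}|\tilde N_n|^{m}=\sup_n \gamma_n^{-m}\,\mathbb{E}|N_n-\eta_n|^m<\infty \quad\text{for every } m.
\]
Bounded $(k+1)$-st absolute moments make $\{\tilde N_n^k\}$ uniformly integrable. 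Hence every moment of $\mu$ is finite, $\mathbb{E}\tilde N_n^k\to\int x^k\,d\mu$, and the variable $a$ with $\varphi(a^k)=\int x^k\,d\mu$ is well defined.

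Checking $\sigma_n=O(\gamma_n)$ is the one real point to verify. It is exactly where $\zeta<\infty$ is used, and it is what lets (\ref{thm31cond1}), normalised by $\sigma_n$, control the moments of $\tilde N_n$, normalised by $\gamma_n$.

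\textbf{Step 3: pass to the limit.} Because $\tilde N_n\cdot1_\mathcal{A}$ lies in the commutative part and $s$ is independent of it, a binomial expansion gives
\[
\tilde\varphi\big[(\delta_n\tilde N_n+\sqrt{1-\delta_n^2}\,s)^m\big]
=\sum_{j=0}^m\binom{m}{j}\delta_n^j(1-\delta_n^2)^{(m-j)/2}\,\mathbb{E}[\tilde N_n^j]\,\varphi(s^{m-j}).
\]
By Steps 1 and 2 this converges to
\[
\sum_{j}\binom{m}{j}\delta^j(1-\delta^2)^{(m-j)/2}\varphi(a^j)\varphi(s^{m-j})=\varphi\big[(\delta a+\sqrt{1-\delta^2}\,s)^m\big],
\]
where $a$ and $s$ commute and are independent in the limit space. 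Together with (\ref{thm31re}) at $l=0$, this proves $\tilde{\mathcal{S}}_{N_n}\stackrel{*}{\to}(1+\zeta)^{-1/2}a+\sqrt{\zeta/(1+\zeta)}\,s$. The limit is self-adjoint because $a$ and $s$ are self-adjoint with real coefficients.

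\textbf{Step 4: weak convergence.} The moments of $\nu_n$ converge to those of $\nu$, so Lemma \ref{lem:momentconv} gives $\nu_n\to\nu$ weakly, provided $\nu$ is determined by its moments. When $\zeta>0$ we have $\delta<1$, and the semicircle component is compactly supported; moment determinacy of $\nu$ then reduces to that of $\mu$. I would state this determinacy (for example, $\mu$ determinate, as the uniqueness clause of Lemma \ref{lem:momentconv} requires) as the implicit assumption used in the final step.
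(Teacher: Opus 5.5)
Your proof is correct and follows the paper's route. The paper likewise gets $\delta_n\to(1+\zeta)^{-1/2}>0$ and obtains uniform integrability of $(\delta_n\tilde N_n)^m$ from the bound $\mathbb{E}|\delta_n\tilde N_n|^m=O(1)$ in the proof of Corollary \ref{crr31}, which is equivalent to your $\sigma_n=O(\gamma_n)$ observation; it then invokes Theorem \ref{thm31}(i) and Lemma \ref{lem:momentconv}, leaving your binomial-expansion step implicit. The determinacy you flag in Step 4 is not an extra assumption: in Section \ref{pre} the paper defines ``the probability measure of a variable'' as the \emph{unique} measure with its moments, so the hypothesis that the limit has probability measure $\nu$ already supplies it.
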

\begin{proof}
    From (\ref{crr33}), we get $\lim_{n\rightarrow\infty}\delta_n=(1+\zeta)^{-1/2}>0$. Then, the probability measure of $\delta_n\tilde{N}_n$ converges weakly to a valid probability measure. 
    Moreover, from the proof of Corollary \ref{crr31}, $\{(\delta_n\tilde{N}_n)^m\}$ is uniformly integrable for each $m\in\mathbb{N}$. Thus,
    \begin{equation*}
        \lim_{n\rightarrow\infty}\mathbb{E}[\delta_n\tilde{N}_n]^m=(1+\zeta)^{-m/2}\int_{\mathbb{R}}x^m\,\mu(\mathrm{d}x),\ m\in\mathbb{N}.
    \end{equation*}
So the first part follows from Theorem \ref{thm31}. The second part follows from Lemma \ref{lem:momentconv}. 
\end{proof}

\begin{corollary}\label{corr45}
    Suppose the assumptions of Theorem \ref{thm31}(ii) hold. 
    Suppose $\frac{N_n}{\eta_n}$ has a limiting probability measure $\mu$. Then, 
    \begin{equation*}
        \lim_{n\rightarrow\infty}\tilde{\varphi}(\tilde{\mathcal{S}}_{N_n}^m)=\varphi(s^m)\int_{0}^\infty x^{m/2}\,\mu(\mathrm{d}x),\ m\in\mathbb{N}.
        \end{equation*}
        In other words, $\tilde{\mathcal{S}}_{N_n}\stackrel{*}{\to}
        sy$ where $s$ and $y$ are independent variables, 
        and the $m$th moment of $y$ is given by $\int_0^\infty x^{m/2}\,\mu(\mathrm{d}x)$. If further $y$ has a probability measure $\mu_y$, and
        $\tilde{\mathcal{S}}_{N_n}$ has a probability measure $\mu_n$, then $\mu_n$ converges weakly to $\mu_s \star \mu_y$ where $\star$ denotes product convolution of two probability measures.   
    \end{corollary}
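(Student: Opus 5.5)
The plan is to read off Corollary \ref{corr45} from Theorem \ref{thm31}(ii) with $l=0$, and then pass to the limit using uniform integrability. Theorem \ref{thm31}(ii) gives
\[
\tilde{\varphi}(\tilde{\mathcal{S}}_{N_n}^m)=\mathbb{E}\big[(N_n/\eta_n)^{m/2}\big]\varphi(s^m)+o(1)
\quad\text{for each } m\in\mathbb{N}.
\]
So the first claim reduces to showing
\[
\mathbb{E}\big[(N_n/\eta_n)^{m/2}\big]\to\int_0^\infty x^{m/2}\,\mu(\mathrm{d}x)
\quad\text{for every } m .
\]

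For this convergence I would argue as follows.

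\textbf{Continuous mapping.} Since $N_n/\eta_n\ge 0$ converges weakly to $\mu$, the continuous mapping theorem applied to $x\mapsto x^{m/2}$ on $[0,\infty)$ gives weak convergence of $(N_n/\eta_n)^{m/2}$ to the law of $X^{m/2}$, where $X\sim\mu$.

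\textbf{Uniform integrability.} Condition (\ref{thm31cond2}) gives $\sup_n\mathbb{E}[(N_n/\eta_n)^{m}]<\infty$ for all $m$, at least for $n$ large; finitely many initial terms are harmless. Since $m/2<m$, this supremum bound shows that $\{(N_n/\eta_n)^{m/2}\}$ is uniformly integrable.

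\textbf{Conclusion.} Weak convergence plus uniform integrability yields convergence of the expectations. It also shows, via Fatou's lemma, that $\int x^{m/2}\,\mu(\mathrm{d}x)<\infty$. The expectations therefore converge to the desired limit.

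Next I would identify the limit as the moments of $sy$. Take $s$ and $y$ independent (tensor independent, so they commute) with $\varphi(y^m)=\int_0^\infty x^{m/2}\,\mu(\mathrm{d}x)$. Such a $y$ exists because these numbers are the moments of the law of $\sqrt{X}$, so one can realize $y$ as multiplication by $\sqrt{X}$ in a commutative space. By independence, $\varphi((sy)^m)=\varphi(s^m y^m)=\varphi(s^m)\varphi(y^m)$. This matches the limit, giving $\tilde{\mathcal{S}}_{N_n}\stackrel{*}{\to} sy$.

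For the weak convergence statement, the moments of $\mu_n$ converge to $\varphi(s^m)\varphi(y^m)$. These are the moments of $\mu_s\star\mu_y$, the law of the product of independent classical random variables with laws $\mu_s$ and $\mu_y$. Lemma \ref{lem:momentconv} then applies once these moments determine the measure. I expect this to be the main obstacle.

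To handle it, I would use the Carleman condition. The semicircle satisfies $|\varphi(s^{2k})|\le 4^k$, so the even moments of the product are bounded by $4^k\varphi(y^{2k})$. Uniqueness then holds whenever $\mu_y$ satisfies Carleman's condition. In that case the product law also satisfies it, because $\sum_k (4^k\varphi(y^{2k}))^{-1/(2k)}$ diverges precisely when $\sum_k \varphi(y^{2k})^{-1/(2k)}$ does. Under that proviso, which is implicit in the statement's phrase "if $y$ has a probability measure" (read as determined by its moments), Lemma \ref{lem:momentconv} gives $\mu_n\to\mu_s\star\mu_y$ weakly.
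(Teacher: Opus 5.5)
Your argument for the moment limit and for $\tilde{\mathcal S}_{N_n}\stackrel{*}{\to} sy$ is the paper's argument. You apply Theorem \ref{thm31}(ii) with $l=0$, and get convergence of $\mathbb{E}[(N_n/\eta_n)^{m/2}]$ from weak convergence together with the moment bound (\ref{thm31cond2}); you just spell out the continuous-mapping and uniform-integrability step that the paper compresses into one sentence. For the weak-convergence claim the paper simply invokes Lemma \ref{lem:momentconv}. That tacitly assumes $\{\varphi(s^m)\varphi(y^m)\}$ determines $\mu_s\star\mu_y$. You are right that this is the real issue, and your Carleman argument (using $\varphi(s^{2k})\le 4^k$) is a valid sufficient condition.

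The one thing to correct is your claim that this proviso is implicit in ``$y$ has a probability measure''. In the paper's terminology that phrase means only that $\mu_y$ is determined by its moments. This is strictly weaker than Carleman's condition, and it does not force $\mu_s\star\mu_y$ to be determinate. For example, let $\mu_y$ have density $c\,e^{-y^{\beta}}$ on $(0,\infty)$ with $\frac12\le\beta<1$, rescaled so that $\int y^2\,\mu_y(dy)=1$.
\begin{itemize}
\item $\mu_y$ is determinate: the Stieltjes--Carleman condition $\sum_k m_k^{-1/(2k)}=\infty$ holds, so $\mu_y$ is Stieltjes-determinate, and being absolutely continuous it is also Hamburger-determinate.
\item $\mu_y$ fails Hamburger--Carleman, since $m_{2k}^{-1/(2k)}\asymp k^{-1/\beta}$ is summable.
\item The density $g$ of $SY$ satisfies $-\log g(x)\le (|x|/(2-\epsilon))^{\beta}+C_\epsilon$ and is bounded below near $0$. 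Hence $\int_{\mathbb R}\frac{-\log g(x)}{1+x^2}\,dx<\infty$, and Krein's criterion shows that $\mu_s\star\mu_y$ is indeterminate.
\end{itemize}
Take semicircular $a_i$ and $N_n=\min(\lceil nY^2\rceil,n^2)$, so that $\mu_n$ is compactly supported. Then every hypothesis of the corollary holds, yet Lemma \ref{lem:momentconv} cannot be applied.

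So you should state Carleman for $\mu_y$, or directly the determinacy of $\mu_s\star\mu_y$, as an explicit extra hypothesis rather than read it into the statement. Without such a hypothesis, the weak convergence has to come from a separate argument that is not based on moments. One option is to view $\mu_n$ as a mixture over $i$ of the laws of $\mathcal S_i/(\sqrt{\eta_n}\theta)$, and combine a weak form of the free CLT with the weak convergence of $N_n/\eta_n$.
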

\begin{proof} 
In view of (\ref{thm31cond2}), the weak convergence of $N_n/\eta_n$ implies the convergence of all its moments. Now, the proof of the first part follows from Theorem \ref{thm31} (ii). The second part follows from Lemma \ref{lem:momentconv}.
\end{proof}

We end this section with some remarks on a multivariate extension of Theorem \ref{thm31}. Let $\{a_i^{(j)},\ 1\leq j\leq k\}_{i\ge1}$ be self-adjoint variables in an NCP $(\mathcal{A},\varphi)$, which are free across $i$. Let $N_n$, $n\ge1$ be a non-negative integer valued random variable, and set $\mathcal{S}_{N_n}^{(j)}\coloneqq a^{(j)}_1+\dots+a^{(j)}_{N_n}$, $j=1,\dots,k$. Then, under suitable extensions of conditions in Theorem \ref{thm31}, the standardized variables $\{\tilde{\mathcal{S}}_{N_n}^{(j)},\ 1\leq j\leq k\}$ jointly converge in the $*$-distribution. In view of the multivariate free central limit theorem (see Theorem 4.2.2 of \cite{Bose2022}), the limit will be the multivariate version of (\ref{thm31re}), except it will involve a semi-circle family.

\section{Proof of Theorem \ref{thm31}}\label{proof}
Before proving Theorem \ref{thm31}, we show explicit calculations for some values of $m$. It is assumed that $\alpha\neq0$. 
Write $s_{\delta_{n}}=(1-\delta_n^2)^{1/2}s$. For $m=1$, clearly, $\tilde{\varphi}(\tilde{\mathcal{S}}_{N_n})=0$ and $\tilde{\varphi}(\delta_n\tilde{N}_n\cdot1_\mathcal{A}+s_{\delta_n})=\delta_n\mathbb{E}\tilde{N}_n+\varphi(s_{\delta_n})=0$. For $m=2$, 
\begin{align*}
    \tilde{\varphi}(\tilde{\mathcal{S}}_{N_n}^2)&=\frac{1}{\sigma_n^2}[\tilde{\varphi}(\mathcal{S}_{N_n}^2)+\eta_n^2\alpha^2-2\eta_n\alpha\tilde{\varphi}(\mathcal{S}_{N_n})]\\
    &=\frac{1}{\sigma_n^2}[\eta_n\theta^2+\rho_n^2\alpha^2+\eta_n^2\alpha^2-2\eta_n^2\alpha^2]\\
    &=\frac{\eta_n\theta^2+\gamma_n^2\alpha^2}{\sigma_n^2}=1,
\end{align*}
where the second equality follows from (\ref{fmomSN}) and (\ref{smomSN}), and the last step follows from  (\ref{tmomSN}). 
Also,
\begin{equation*}
    \tilde{\varphi}[(\delta_n\tilde{N}_n\cdot1_\mathcal{A}+s_{\delta_n})^2]=\delta_n^2\mathbb{E}\tilde{N}_n^2+\varphi(s_{\delta_n}^2)+2\delta_n\mathbb{E}N_n\varphi(s_{\delta_n})=\delta_n^2+1-\delta_n^2=1.
\end{equation*}
Now, for $m=3$, using (\ref{fmomSN}) and (\ref{smomSN}), we get
\begin{align}
    \tilde{\varphi}(\tilde{\mathcal{S}}_{N_n}^3)&=\frac{1}{\sigma_n^3}[\tilde{\varphi}(\mathcal{S}_{N_n}^3)-\eta_n^3\alpha^3-3\eta_n\alpha\tilde{\varphi}(\mathcal{S}_{N_n}^2)+3\eta_n^2\alpha^2\tilde{\varphi}(\mathcal{S}_{N_n})]\nonumber\\
    &=\frac{1}{\sigma_n^3}[\tilde{\varphi}(\mathcal{S}_{N_n}^3)-\eta_n^3\alpha^3-3\eta_n^2\alpha\theta^2-3\eta_n\rho_n^2\alpha^3+3\eta_n^3\alpha^3]\nonumber\\
    &=\frac{1}{\sigma_n^3}[\tilde{\varphi}(\mathcal{S}_{N_n}^3)-3\eta_n^2\alpha\theta^2-3\eta_n\rho_n^2\alpha^3+2\eta_n^3\alpha^3],\label{Snttmom}
\end{align}
where 
\begin{align}
    \tilde{\varphi}(\mathcal{S}_{N_n}^3)=\mathbb{E}\varphi(\mathcal{S}_{N_n}^3)&=\mathbb{E}\sum_{\pi\in NC(3)}\kappa_\pi(\mathcal{S}_{N_n})\nonumber\\
    &=\mathbb{E}\kappa_3(\mathcal{S}_{N_n})+3\mathbb{E}\kappa_1(\mathcal{S}_{N_n})\kappa_2(\mathcal{S}_{N_n})+\mathbb{E}[\kappa_1(\mathcal{S}_{N_n})]^3\nonumber\\
    &=\mathbb{E}N_n\kappa_3(a_1)+3\mathbb{E}N_n^2\kappa_1(a_1)\kappa_2(a_1)+\mathbb{E}N_n^3[\kappa_1(a_1)]^3\nonumber\\
    &=\eta_n\kappa_3(a_1)+3\rho_n^2\alpha\theta^2+\mathbb{E}N_n^3\alpha^3.\label{SN3mom}
\end{align}
Substituting this in (\ref{Snttmom}) yields
\begin{align*}
    \tilde{\varphi}(\tilde{\mathcal{S}}_{N_n}^3)&=\frac{1}{\sigma_n^3}[\eta_n\kappa_3(a_1)+3\rho_n^2\alpha\theta^2+\mathbb{E}N_n^3\alpha^3-3\eta_n^2\alpha\theta^2-3\eta_n\rho_n^2\alpha^3+2\eta_n^3\alpha^3]\\
    &=\frac{1}{\sigma_n^3}[\eta_n\kappa_3(a_1)+3\gamma_n^2\alpha\theta^2+\mathbb{E}N_n^3\alpha^3-3\eta_n\rho_n^2\alpha^3+2\eta_n^3\alpha^3].
\end{align*}
From (\ref{tmomSN}), we have $\alpha^2\gamma_n^2\leq \sigma_n^2$ and $\theta^2\eta_n\leq \sigma_n^2$. So, $\gamma_n^2/\sigma_n^3=o(1)$ and  $\eta_n/\sigma_n^3=o(1)$, as $n\rightarrow\infty$. Thus, using $\sigma_n=\gamma_n\alpha/\delta_n$, as $n\rightarrow\infty$,
\begin{equation}\label{thm31pf1}
    \tilde{\varphi}(\tilde{\mathcal{S}}_{N_n}^3)=\frac{\delta_n^3}{\gamma_n^3}[\mathbb{E}N_n^3-3\eta_n\rho_n^2+2\eta_n^3]+o(1).
\end{equation}
On the other hand, since $\varphi(s_{\delta_n}^3)=0$, $\varphi(s_{\delta_n})=0$ and $\mathbb{E}\tilde{N}_n=0$, we have 
\begin{align*}
    \tilde{\varphi}[(\delta_n\tilde{N}_n\cdot1_\mathcal{A}+s_{\delta_n})^3]&=\tilde{\varphi}[\delta_n^3\tilde{N}_n^3\cdot1_\mathcal{A}+s_{\delta_n}^3+3\delta_n^2\tilde{N}_n^2s_{\delta_n}+3\delta_n\tilde{N}_ns_{\delta_n}^2]\\
&=\delta_n^3\mathbb{E}\tilde{N}_n^3+\varphi(s_{\delta_n}^3)+3\delta_n^2\mathbb{E}\tilde{N}_n^2\varphi(s_{\delta_n})+3\delta_n\mathbb{E}\tilde{N}_n\varphi(s_{\delta_n}^2)\\
    &=\frac{\delta_n^3}{\gamma_n^3}[\mathbb{E}N_n^3-\eta_n^3-3\eta_n\mathbb{E}N_n^2+3\eta_n^2\mathbb{E}N_n]\\
    &=\frac{\delta_n^3}{\gamma_n^3}[\mathbb{E}N_n^3+2\eta_n^3-3\eta_n\rho_n^2],
\end{align*}
which on comparing with (\ref{thm31pf1}), shows that $\tilde{\varphi}(\tilde{\mathcal{S}}_{N_n}^3)=\tilde{\varphi}[(\delta_n\tilde{N}_n\cdot1_\mathcal{A}+s_{\delta_n})^3]+o(1)$. This verifies the theorem for $m=1, 2, 3$. 
    


\begin{proof}[\textbf{Proof of Theorem \ref{thm31}(i)}]
Let $\mathcal{S}_i'=\sum_{j=1}^{i}(a_j-\alpha)$, $i\in\mathbb{N}$. For $m\ge1$, 
\begin{align*}
    \mu_m=\tilde{\varphi}(\tilde{\mathcal{S}}_{N_n}^m(\delta_n\tilde{N}_n)^l)&=\sum_{i=0}^{\infty}\mathbb{P}\{N_n=i\}(\delta_n\frac{i-\eta_n}{\gamma_n})^l\varphi\Big[(\frac{\mathcal{S}_i-\eta_n\alpha}{\sigma_n})^m\Big]\\
    &=\sum_{i=0}^{\infty}\mathbb{P}\{N_n=i\}(\delta_n\frac{i-\eta_n}{\gamma_n})^l\varphi\Big[(\frac{\mathcal{S}_i-i\alpha+(i-\eta_n)\alpha}{\sigma_n})^m\Big]\\
    &=\sum_{i=0}^{\infty}\mathbb{P}\{N_n=i\}(\delta_n\frac{i-\eta_n}{\gamma_n})^l\sum_{r=0}^{m}\binom{m}{r}(\frac{\alpha}{\sigma_n})^r(i-\eta_n)^r\varphi\Big[(\frac{\mathcal{S}_i-i\alpha}{\sigma_n})^{m-r}\Big]\\
    &=\sum_{r=0}^{m}\binom{m}{r}\sum_{i=0}^{\infty}\mathbb{P}\{N_n=i\}(\delta_n\frac{i-\eta_n}{\gamma_n})^l(\frac{\alpha}{\sigma_n})^r(i-\eta_n)^r\varphi\Big[(\frac{\mathcal{S}_i'}{\sigma_n})^{m-r}\Big].
\end{align*}
Let us define
\begin{equation}\label{pf11}
    \mu_m'=\sum_{r=0}^{m}\binom{m}{r}\sum_{i=0}^{\infty}\mathbb{P}\{N_n=i\}(\delta_n\frac{i-\eta_n}{\gamma_n})^l(\frac{\alpha}{\sigma_n})^r(i-\eta_n)^r\varphi\Big[(\frac{\mathcal{S}_{ \lfloor \eta_n\rfloor}'}{\sigma_n})^{m-r}\Big],
\end{equation}
where $\lfloor\cdot\rfloor$ denotes the greatest integer function. Then,
\begin{align}
    \mu_m-\mu_m'&=\sum_{r=0}^{m}\binom{m}{r}\sum_{i=0}^{\infty}\mathbb{P}\{N_n=i\}(\delta_n\frac{i-\eta_n}{\gamma_n})^l(\frac{\alpha}{\sigma_n})^r(i-\eta_n)^r\Big[\varphi\Big[(\frac{\mathcal{S}_i'}{\sigma_n})^{m-r}\Big]-\varphi\Big[(\frac{\mathcal{S}_{\lfloor\eta_n\rfloor}'}{\sigma_n})^{m-r}\Big]\Big]\nonumber\\
    &=\sum_{i=0}^{\infty}\mathbb{P}\{N_n=i\}(\delta_n\frac{i-\eta_n}{\gamma_n})^l\Big[\varphi\Big[(\frac{\mathcal{S}_i-\eta_n\alpha}{\sigma_n})^m\Big]-\varphi\Big[(\frac{\mathcal{S}_{\lfloor\eta_n\rfloor}+i\alpha-(\lfloor\eta_n\rfloor+\eta_n)\alpha}{\sigma_n})^m\Big]\Big].\label{pf12}
\end{align}
Here, using (\ref{momcumrelation}), we have
\begin{align}
    \varphi\Big[&(\frac{\mathcal{S}_i-\eta_n\alpha}{\sigma_n})^m\Big]-\varphi\Big[(\frac{\mathcal{S}_{\lfloor\eta_n\rfloor}+i\alpha-(\lfloor\eta_n\rfloor+\eta_n)\alpha}{\sigma_n})^m\Big]\nonumber\\
    &=\frac{1}{\sigma_n^m}\sum_{\pi\in NC(m)}[\kappa_\pi(\mathcal{S}_i-\eta_n\alpha)-\kappa_\pi(\mathcal{S}_{\lfloor\eta_n\rfloor}+i\alpha-(\lfloor\eta_n\rfloor+\eta_n)\alpha)]\nonumber\\
    &=\frac{1}{\sigma_n^m}\sum_{\substack{\pi\in NC(m):\\
    |V|=1\ \text{for some}\ V\in\pi}}[\kappa_\pi(\mathcal{S}_i-\eta_n\alpha)-\kappa_\pi(\mathcal{S}_{\lfloor\eta_n\rfloor}+i\alpha-(\lfloor\eta_n\rfloor+\eta_n)\alpha)]\nonumber\\
    &\ \ +\frac{1}{\sigma_n^m}\sum_{\substack{\pi\in NC(m):\\
    |V|\neq1\ \text{for all}\ V\in\pi}}[\kappa_\pi(\mathcal{S}_i-\eta_n\alpha)-\kappa_\pi(\mathcal{S}_{\lfloor\eta_n\rfloor}+i\alpha-(\lfloor\eta_n\rfloor+\eta_n)\alpha)]\nonumber\\
    &=\frac{1}{\sigma_n^m}\sum_{\substack{\pi\in NC(m):\\
    |V|=1\ \text{for some}\ V\in\pi}}\Big[\prod_{V\in\pi:|V|=1}\kappa_1(\mathcal{S}_i-\eta_n\alpha)\prod_{V\in\pi:|V|\neq1}\kappa_{|V|}(\mathcal{S}_i)\nonumber\\
    &\ \ -\prod_{V\in\pi:|V|=1}\kappa_1(\mathcal{S}_{\lfloor\eta_n\rfloor}+i\alpha-(\lfloor\eta_n\rfloor+\eta_n)\alpha)\prod_{V\in\pi:|V|\neq1}\kappa_{|V|}(\mathcal{S}_{\lfloor\eta_n\rfloor})\Big]\nonumber\\
    &\ \ +\frac{1}{\sigma_n^m}\sum_{\substack{\pi\in NC(m):\\
    |V|\neq1\ \text{for all}\ V\in\pi}}[\kappa_\pi(\mathcal{S}_i)-\kappa_\pi(\mathcal{S}_{\lfloor\eta_n\rfloor})]\nonumber\\
    &=\frac{1}{\sigma_n^m}\sum_{\substack{\pi\in NC(m):\\
    |V|=1\ \text{for some}\ V\in\pi,\\
    \pi\neq0_m}}\Big[\prod_{V\in\pi:|V|=1}(i-\eta_n)\alpha\Big]\Big[\prod_{V\in\pi:|V|\neq1}i\kappa_{|V|}(a_1) -\prod_{V\in\pi:|V|\neq1}\lfloor\eta_n\rfloor\kappa_{|V|}(a_1)\Big]\nonumber\\
    &\ \ +\frac{1}{\sigma_n^m}\sum_{\substack{\pi\in NC(m):\\
    |V|\neq1\ \text{for all}\ V\in\pi}}(i^{|\pi|}-\lfloor\eta_n\rfloor^{|\pi|})\kappa_{\pi}(a_1)\label{pf13}\\
    &=\frac{1}{\sigma_n^m}\sum_{\substack{\pi\in NC(m):\\
    |V|=1\ \text{for some}\ V\in\pi,\\
    \pi\neq 0_m}}\alpha^{\#1^\pi}(i-\eta_n)^{\#1^\pi}(i^{|\pi|-\#1^\pi}-\lfloor\eta_n\rfloor^{|\pi|-\#1^\pi})\prod_{V\in\pi:|V|\neq1}\kappa_{|V|}(a_1)\nonumber\\
    &\ \ +\frac{1}{\sigma_n^m}\sum_{\substack{\pi\in NC(m):\\
    |V|\neq1\ \text{for all}\ V\in\pi}}(i^{|\pi|}-\lfloor\eta_n\rfloor^{|\pi|})\kappa_{\pi}(a_1),\label{pf14}
\end{align}
where $\#1^\pi=|\{V\in\pi:\,|V|=1\}|$ is the number of blocks in $\pi$ with only one element. To get (\ref{pf13}), we have used, (i) the freeness of $a_i$'s, (ii) the fact that free cumulants of order greater than one are translation invariant, and (iii)  for $\pi=\textbf{0}_m$, $\kappa_\pi(\mathcal{S}_i-\eta_n\alpha)-\kappa_\pi(\mathcal{S}_{\lfloor\eta_n\rfloor}+i\alpha-(\lfloor\eta_n\rfloor+\eta_n)\alpha)=0$. Now, on substituting (\ref{pf14}) in (\ref{pf12}), we get
\begin{align}
    &\mu_m-\mu_m'\nonumber\\
    &=\frac{1}{\sigma_n^m}\sum_{\substack{\pi\in NC(m):\\
    |V|=1\ \text{for some}\ V\in\pi,\\
    \pi\neq 0_m}}\alpha^{\#1^\pi}\sum_{i=0}^{\infty}\mathbb{P}\{N_n=i\}(\delta_n\frac{i-\eta_n}{\gamma_n})^l(i-\eta_n)^{\#1^\pi}(i^{|\pi|-\#1^\pi}-\lfloor\eta_n\rfloor^{|\pi|-\#1^\pi})\nonumber\\
    &\ \ \cdot\prod_{V\in\pi:|V|\neq1}\kappa_{|V|}(a_1)+\frac{1}{\sigma_n^m}\sum_{\substack{\pi\in NC(m):\\
    |V|\neq1\ \text{for all}\ V\in\pi}}\sum_{i=0}^{\infty}\mathbb{P}\{N_n=i\}(\delta_n\frac{i-\eta_n}{\gamma_n})^l(i^{|\pi|}-\lfloor\eta_n\rfloor^{|\pi|})\kappa_{\pi}(a_1)\nonumber\\
    &=\frac{\alpha^{l+\#1^\pi}}{\sigma_n^{m+l}}\sum_{\substack{\pi\in NC(m):\\
    |V|=1\ \text{for some}\ V\in\pi,\\
    \pi\neq 0_m}}\Big[\prod_{V\in\pi:|V|\neq1}\kappa_{|V|}(a_1)\Big]\Big[\mathbb{E}[(N_n-\eta_n)^{\#1^\pi+l}(N_n^{|\pi|-\#1^\pi}-\lfloor\eta_n\rfloor^{|\pi|-\#1^\pi})\Big]\nonumber\\
    &\ \ +\frac{\alpha^l}{\sigma_n^{m+l}}\sum_{\substack{\pi\in NC(m):\\
    |V|\neq1\ \text{for all}\ V\in\pi}}\kappa_{\pi}(a_1)\mathbb{E}[(N_n^{|\pi|}-\lfloor\eta_n\rfloor^{|\pi|})(N_n-\eta_n)^l],\label{pf15}
\end{align}
where we have used $\delta_n/\gamma_n=\alpha/\sigma_n$ to get the last step. Now, consider 
\begin{align}
    \frac{1}{\sigma_n^{m+l}}\mathbb{E}[(N_n&-\eta_n)^{\#1^\pi+l}(N_n^{|\pi|-\#1^\pi}-\lfloor\eta_n\rfloor^{|\pi|-\#1^\pi})]\nonumber\\
    &= \frac{1}{\sigma_n^{m+l}}\mathbb{E}[(N_n-\eta_n)^{\#1^\pi+l}(N_n-\eta_n+\eta_n)^{|\pi|-\#1^\pi}]-\mathbb{E}[(N_n-\eta_n)^{\#1^\pi+l}]\lfloor\eta_n\rfloor^{|\pi|-\#1^\pi}\nonumber\\
    &= \frac{1}{\sigma_n^{m+l}}\sum_{r=1}^{|\pi|-\#1^\pi}\binom{|\pi|-\#1^\pi}{r}\mathbb{E}[(N_n-\eta_n)^{\#1^\pi+l+r}]\eta_n^{|\pi|-\#1^\pi-r}\nonumber\\
    &\ \ + \frac{1}{\sigma_n^{m+l}}\mathbb{E}[(N_n-\eta_n)^{\#1^\pi+l}](\eta_n^{|\pi|-\#1^\pi}-\lfloor\eta_n\rfloor^{|\pi|-\#1^\pi}).\label{pf1c1}
\end{align}
Note that for any $\pi\neq 0_m$, $0\leq \#1^\pi< m$ and $|\pi|-\#1^\pi\leq (m-\#1^\pi)/2$, which implies $\#1^\pi+2(|\pi|-\#1^\pi)\leq m$. So, using the third condition of (\ref{thm31cond1}) and $\eta_n\leq \sigma_n^2/\theta^2$, the first term in (\ref{pf1c1}) reduces to
\begin{align*}
    \frac{1}{\sigma_n^{m+l}}\sum_{r=1}^{|\pi|-\#1^\pi}\binom{|\pi|-\#1^\pi}{r}&\mathbb{E}[(N_n-\eta_n)^{\#1^\pi+l+r}]\eta_n^{|\pi|-\#1^\pi-r}\\
    &=\frac{1}{\sigma_n^{m+l}}\sum_{r=1}^{|\pi|-\#1^\pi}\binom{|\pi|-\#1^\pi}{r}O(\sigma_n^{2|\pi|+l-\#1^\pi-r})=o(1).
\end{align*}
Moreover, for $|\pi|-\#1^\pi=1$, $0\leq \eta_n-\lfloor\eta_n\rfloor<1$ and $\eta_n^{|\pi|-\#1^\pi}-\lfloor\eta_n\rfloor^{|\pi|-\#1^\pi}=O(\lfloor\eta_n\rfloor^{|\pi|-\#1^\pi-1})$ whenever $|\pi|-\#1^\pi>1$. In view of this, again using the third condition of (\ref{thm31cond1}) and $\eta_n\leq \sigma_n^2/\theta^2$, the second term in (\ref{pf1c1}) is
\begin{equation*}
    \frac{1}{\sigma_n^{m+l}}\mathbb{E}[(N_n-\eta_n)^{\#1^\pi+l}](\eta_n^{|\pi|-\#1^\pi}-\lfloor\eta_n\rfloor^{|\pi|-\#1^\pi})=o(1).
\end{equation*}
Thus, (\ref{pf1c1}) reduces to
\begin{equation}\label{pf1c2}
    \frac{1}{\sigma_n^{m+l}}\mathbb{E}[(N_n-\eta_n)^{\#1^\pi+l}(N_n^{|\pi|-\#1^\pi}-\lfloor\eta_n\rfloor^{|\pi|-\#1^\pi})]=o(1).
\end{equation}
 
Furthermore, for $\pi\in NC(m)$ such that $|V|\neq 1$ for each $V\in\pi$, we have $|\pi|\leq m/2$. Thus, using a similar argument as for (\ref{pf1c2}), we get
\begin{equation}\label{pf1c3}
    \frac{1}{\sigma_n^{m+l}}\mathbb{E}[(N_n^{|\pi|}-\lfloor\eta_n\rfloor^{|\pi|})(N_n-\eta_n)^l]=o(1).
\end{equation}
Substituting (\ref{pf1c2}) and (\ref{pf1c3}) in (\ref{pf15}) yields
\begin{equation}\label{pf17}
    \mu_m=\mu_m'+o(1).
\end{equation}

We now derive the approximation of $\mu_m'$, as $n\rightarrow\infty$. From (\ref{pf11}),
\begin{align}
    \mu_m'&=\sum_{r=0}^{m}\binom{m}{r}\sum_{i=0}^{\infty}\mathbb{P}\{N_n=i\}(\frac{\alpha}{\sigma_n})^r(\delta_n\frac{i-\eta_n}{\gamma_n})^l(i-\eta_n)^r\varphi\Big[(\frac{\mathcal{S}_{\lfloor\eta_n\rfloor}'}{\sigma_n})^{m-r}\Big]\nonumber\\
    &=\sum_{r=0}^{m}\binom{m}{r}\sum_{i=0}^{\infty}\mathbb{P}\{N_n=i\}(\frac{\alpha}{\sigma_n})^r(i-\eta_n)^r(\delta_n\frac{i-\eta_n}{\gamma_n})^l(\frac{\sqrt{\eta_n}\theta}{\sigma_n})^{m-r}\varphi\Big[(\frac{\mathcal{S}_{\lfloor\eta_n\rfloor}'}{\sqrt{\eta_n}\theta})^{m-r}\Big].\label{pf18}
\end{align}
From (\ref{deltadef}),
\begin{equation}\label{pf19}
    \frac{\sqrt{\eta_n}\theta}{\sigma_n}=(\frac{\eta_n\theta^2}{\sigma_n^2})^{\frac{1}{2}}=(1-\delta_n^2)^{\frac{1}{2}}.
\end{equation}
As $\eta_n\rightarrow\infty$, in view of the free clt (see Theorem 8.10 of \cite{Nica2007}), 
\begin{equation}\label{pf110}
    \varphi\Big[(\frac{\mathcal{S}_{\lfloor\eta_n\rfloor}'}{\sqrt{\eta_n}\theta})^{m-r}\Big]=\varphi[s^{m-r}]+o(1),
\end{equation}
where $s$ is a standard semi-circle variable. Also, using $\eta_n\theta^2\leq \sigma_n^2$, we have 
\begin{align}
    \Big|\sum_{r=0}^{m}\binom{m}{r}\sum_{i=0}^{\infty}\mathbb{P}\{N_n=i\}&(\frac{\alpha}{\sigma_n})^r(i-\eta_n)^r(\delta_n\frac{i-\eta_n}{\gamma_n})^l(\frac{\sqrt{\eta_n}\theta}{\sigma_n})^{m-r}\Big|\nonumber\\
    &\leq \sum_{r=0}^{m}\binom{m}{r}\mathbb{E}[|N_n-\eta_n|^r](\delta_n\frac{|N_n-\eta_n|}{\gamma_n})^l(\frac{\alpha}{\sigma_n})^r\nonumber\\
    &= \sum_{r=0}^{m}\binom{m}{r}\frac{O(\sigma_n^{r+l})}{\sigma_n^{r+l}}\alpha^{r+l}\leq C<\infty,\label{pf111}
\end{align}
where the last step follows from (\ref{thm31cond1}).
Therefore, using (\ref{pf19})-(\ref{pf111})
 in (\ref{pf18}), we get
\begin{align*}
    \mu_m'&=\sum_{r=0}^{m}\binom{m}{r}\sum_{i=0}^{\infty}\mathbb{P}\{N_n=i\}(\frac{\alpha}{\sigma_n})^r(i-\eta_n)^r(\delta_n\frac{i-\eta_n}{\gamma_n})^l(\frac{\sqrt{\eta_n}\theta}{\sigma_n})^{m-r}\varphi[s^{m-r}]+o(1)\\
    &=\sum_{r=0}^{m}\binom{m}{r}\sum_{i=0}^{\infty}\mathbb{P}\{N_n=i\}(\frac{\delta_n}{\gamma_n})^r(i-\eta_n)^r(\delta_n\frac{i-\eta_n}{\gamma_n})^l(1-\delta_n^2)^{(m-r)/2}\varphi[s^{m-r}]+o(1)\\
    &=\sum_{r=0}^{m}\binom{m}{r}\delta_n^r\mathbb{E}(\frac{N_n-\eta_n}{\gamma_n})^r(\delta_n\frac{N_n-\eta_n}{\gamma_n})^l(1-\delta_n^2)^{(m-r)/2}\varphi[s^{m-r}]+o(1)\\
    &=\mathbb{E}(\delta_n\frac{N_n-\eta_n}{\gamma_n})^l\varphi\Big[(\delta_n\frac{N_n-\eta_n}{\gamma_n}\cdot1_\mathcal{A}+\sqrt{1-\delta_n^2}s)^m\Big]+o(1),
\end{align*}
which on using in (\ref{pf17}) yields (\ref{thm31re}). This completes the proof of Theorem \ref{thm31} (i).  
\end{proof}

\begin{proof} [\textbf{Proof of Theorem \ref{thm31}(ii)}]
Let us, without loss, assume that $\eta_n$ is an integer; otherwise, as in the proof of Part (i), the argument below goes through if $\eta_n$ is replaced by its integer part. For $m,l\in\mathbb{N}\cup\{0\}$, consider
\begin{align}
        \mu_m''&:=\sum_{i=0}^{\infty}\mathbb{P}\{N_n=i\}(\frac{i}{\eta_n})^l\varphi\Big[(\frac{\sqrt{i}\mathcal{S}_{\eta_n}}{\eta_n\theta})^m\Big]\nonumber\\
        &=\sum_{i=0}^{\infty}\mathbb{P}\{N_n=i\}(\frac{i}{\eta_n})^{l+\frac{m}{2}}\varphi\Big[(\frac{\mathcal{S}_{\eta_n}}{\sqrt{\eta_n}\theta})^m\Big]\nonumber\\
        &=\mathbb{E}\Big[(\frac{N_n}{\eta_n})^{l+\frac{m}{2}}\Big]\varphi\Big[(\frac{\mathcal{S}_{\eta_n}}{\sqrt{\eta_n}\theta})^m\Big].\label{pf3.21}
    \end{align}
    From (\ref{pf110}), we have
    \begin{equation}\label{pf3.22}
        \varphi\Big[(\frac{\mathcal{S}_{\eta_n}}{\sqrt{\eta_n}\theta})^m\Big]=\varphi(s^m)+o(1),
    \end{equation}
    where $s$ is the standard semi-circle variable. Thus, using (\ref{thm31cond2}), we get
    \begin{equation*}
        \mu_m''=\mathbb{E}\Big[(\frac{N_n}{\eta_n})^{l+\frac{m}{2}}\Big]\varphi(s^m)+o(1).
    \end{equation*}

    
To complete the proof, it is now sufficient to show that, as $n\rightarrow\infty$,
\begin{equation*}
   \Big|\tilde{\varphi}\Big[\tilde{\mathcal{S}}_{N_n}^m(\frac{N_n}{\eta_n})^l\Big]-\mu_m''\Big|=o(1).
\end{equation*}
 Recall that $\alpha=0$. Hence we have
    \begin{equation}\label{pf3.25}
        \tilde{\varphi}\Big[\tilde{\mathcal{S}}_{N_n}^m(\frac{N_n}{\eta_n})^l\Big]-\mu_m''=\sum_{i=0}^{\infty}\mathbb{P}\{N_n=i\}(\frac{i}{\eta_n})^l\Big[\varphi\Big[(\frac{\mathcal{S}_i}{\sqrt{\eta_n}\theta})^m\Big]-\varphi\Big[(\frac{\sqrt{i}\mathcal{S}_{\eta_n}}{\eta_n\theta})^m\Big]\Big].
    \end{equation}
    Let $NC_2^c(m)$ denote the complement of $NC_2(m)$. From (\ref{momcumrelation}), we have
    \begin{align}
        \varphi\Big[(\frac{\mathcal{S}_i}{\sqrt{\eta_n}\theta})^m\Big]-\varphi\Big[(\frac{\sqrt{i}\mathcal{S}_{\eta_n}}{\eta_n\theta})^m\Big]&=\sum_{\pi\in NC(m)}\Big[(\frac{1}{\sqrt{\eta_n}\theta})^m\kappa_\pi(\mathcal{S}_i)-(\frac{\sqrt{i}}{\eta_n\theta})^m\kappa_\pi(\mathcal{S}_{\eta_n})\Big]\nonumber\\
        &=\sum_{\substack{\pi\in NC(m):\\
        |V|\neq1\ \text{for each}\ V\in\pi}}\Big[(\frac{1}{\sqrt{\eta_n}\theta})^m i^{|\pi|}-(\frac{\sqrt{i}}{\eta_n\theta})^m\eta_n^{|\pi|}\Big]\kappa_\pi(a_1)\nonumber\\
        &=\sum_{\substack{\pi\in NC(m)\cap NC_2^c(m):\\
        |V|\neq1\ \text{for each}\ V\in\pi}}\Big[(\frac{1}{\sqrt{\eta_n}\theta})^m i^{|\pi|}-(\frac{\sqrt{i}}{\eta_n\theta})^m\eta_n^{|\pi|}\Big]\kappa_\pi(a_1),\label{pf3.26}
    \end{align}
    where we have used the freeness of $a_j$'s and $\varphi(a_j)=0$ to get the penultimate step, and the last step follows by using the fact that $|\pi|=m/2$ for all $\pi\in NC_2(m)$, whenever $m$ is even. Then, on substituting (\ref{pf3.26}) in (\ref{pf3.25}), we get
    \begin{align*}
        \Big|\tilde{\varphi}\Big[\tilde{\mathcal{S}}_{N_n}^m(\frac{N_n}{\eta_n})^l\Big]-\mu_m''\Big|&=\Big|\sum_{i=0}^{\infty}\mathbb{P}\{N_n=i\}(\frac{i}{\eta_n})^l\sum_{\substack{\pi\in NC(m)\cap NC_2^c(m):\\
        |V|\neq1\ \text{for each}\ V\in\pi}}\Big[(\frac{1}{\sqrt{\eta_n}\theta})^m i^{|\pi|}-(\frac{\sqrt{i}}{\eta_n\theta})^m\eta_n^{|\pi|}\Big]\kappa_\pi(a_1)\Big|\\
        &\leq \frac{1}{\theta^m}\sum_{\substack{\pi\in NC(m)\cap NC_2^c(m):\\
        |V|\neq1\ \text{for each}\ V\in\pi}}\Big[\mathbb{E}\Big[|\frac{N_n}{\eta_n}|^{|\pi|+l}\Big]+\mathbb{E}\Big[|\frac{N_n}{\eta_n}|^{l+\frac{m}{2}}\Big]\Big]\eta_n^{|\pi|-\frac{m}{2}}\kappa_\pi(a_1)\\
        &=o(1),
    \end{align*}
    where we have used (\ref{thm31cond2}), and the fact that $|\pi|<m/2$ whenever $\pi\in  NC(m)\cap NC_2^c(m)$, to obtain the last step. This completes the proof of Theorem \ref{thm31} (ii).
\end{proof}
\section{Random sums under other forms of independence}\label{sec5}
Now we consider random sums of self-adjoint variables under two different dependence structures, namely, independence and half-independence, and establish results analogous to those obtained in Section \ref{sec4}. 
The corresponding limits here involve the Gaussian and symmetric Rayleigh variables, instead of the semi-circle variable. 
\vskip5pt

\noindent \textbf{Independent variables.} Recall the NCP $(\tilde{\mathcal{A}}, \tilde{\varphi})$. 
As before $\{a_j\}$
are variables from $\mathcal{A}$ and 
$N_n$, $n\ge1$ is a non-negative integer valued rv (independent of $\mathcal{A}$) 
and 
   $\mathcal{S}_{N_n}\coloneqq a_1+\dots+a_{N_n}.$
But now we assume that $\{a_j\}$ are independent (necessarily commutative) variables. 

We have the following moment version of Theorem \ref{thmjtcov} similar to Theorem \ref{thm31}. We omit the proof, which, in view of Theorem \ref{thm:generalclt} (ii), would be along the same lines as the proof of Theorem \ref{thm31}, except we have to use the lattice of all partitions, cumulants and moment-cumulant relations, instead of non-crossing partitions, free cumulants, and moment--free cumulant relations. 

\begin{theorem} \label{thm51}
Let $\mathcal{S}_{N_n}$ be as in Theorem \ref{thm31}, except that its summands $\{a_i\}$ are independent and identically distributed variables on an NCP $(\mathcal{A},\varphi)$. Let $z$ be a standard Gaussian variable in $\tilde{\mathcal{A}}$, which is independent of $\tilde{N}_n$ (as defined above).
    Then, we have the following: \vskip5pt

\noindent(i) Under the assumptions of Theorem \ref{thm31} (i), we have \begin{equation*}
        \tilde{\varphi}(\tilde{\mathcal{S}}_{N_n}^m(\delta_n\tilde{N}_n)^l)=\tilde{\varphi}[(\delta_n\tilde{N}_n\cdot1_\mathcal{A}+\sqrt{1-\delta_n^2}z)^m(\delta_n\tilde{N}_n)^l]+o(1)\ \ \text{for all}\ \ m,l\in\mathbb{N}\cup\{0\}.
    \end{equation*}
    \vskip5pt
      
\noindent (ii) Under the assumptions of Theorem \ref{thm31} (ii), 
    \begin{equation*}
         \tilde{\varphi}(\tilde{\mathcal{S}}_{N_n}^m(\frac{N_n}{\eta_n})^l)=\mathbb{E}[(\frac{N_n}{\eta_n})^{\frac{m}{2}+l}]\varphi(z^m)+o(1),\  \ \text{for all}\ \ m,l\in\mathbb{N}\cup\{0\}.
        \end{equation*}
    \end{theorem}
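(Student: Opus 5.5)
The plan is to mirror the proof of Theorem \ref{thm31}. Each free cumulant and non-crossing partition is replaced by a classical cumulant $c_k$ and a partition in $\mathcal{P}(m)$, using (\ref{eq:classicalmoment}) in place of (\ref{momcumrelation}). Independence of the identically distributed $a_j$ gives additivity of classical cumulants, $c_k(\mathcal{S}_i)=i\,c_k(a_1)$ for $k\ge1$. Cumulants of order $\ge2$ are translation invariant, so for every $\pi\in\mathcal{P}(m)$ we have $c_\pi(\mathcal{S}_i-\eta_n\alpha)=\big((i-\eta_n)\alpha\big)^{\#1^\pi}\, i^{|\pi|-\#1^\pi}\prod_{V:|V|\ge2}c_{|V|}(a_1)$. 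This is the exact analogue of the identity behind (\ref{pf13}).

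For part (i), condition on $N_n=i$ and write $\mathcal{S}_i-\eta_n\alpha=\mathcal{S}_i'+(i-\eta_n)\alpha$ with $\mathcal{S}_i'=\sum_{j\le i}(a_j-\alpha)$. Expanding binomially, I define $\mu_m'$ as in (\ref{pf11}), with $\mathcal{S}_i'$ replaced by $\mathcal{S}'_{\lfloor\eta_n\rfloor}$. The difference $\mu_m-\mu_m'$ is handled by the same decomposition as (\ref{pf14})--(\ref{pf15}). One splits over $\pi\in\mathcal{P}(m)$ according to whether singletons are present, and the term $\pi=\mathbf{0}_m$ cancels. What remains is expectations of the form
\[
\sigma_n^{-(m+l)}\,\mathbb{E}\big[(N_n-\eta_n)^{\#1^\pi+l}\big(N_n^{|\pi|-\#1^\pi}-\lfloor\eta_n\rfloor^{|\pi|-\#1^\pi}\big)\big],
\]
together with $\sigma_n^{-(m+l)}\mathbb{E}\big[(N_n^{|\pi|}-\lfloor\eta_n\rfloor^{|\pi|})(N_n-\eta_n)^l\big]$ for partitions without singletons. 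The key counting fact, $\#1^\pi+2(|\pi|-\#1^\pi)\le m$, holds for all partitions and not only non-crossing ones, because every non-singleton block has size at least $2$. Hence the estimates (\ref{pf1c1})--(\ref{pf1c3}) go through verbatim, using (\ref{thm31cond1}) and $\eta_n\theta^2\le\sigma_n^2$, and give $\mu_m=\mu_m'+o(1)$.

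For $\mu_m'$, write $\varphi[(\mathcal{S}'_{\lfloor\eta_n\rfloor}/\sigma_n)^{m-r}]=(1-\delta_n^2)^{(m-r)/2}\varphi[(\mathcal{S}'_{\lfloor\eta_n\rfloor}/\sqrt{\eta_n}\theta)^{m-r}]$ as in (\ref{pf19}). Then invoke Theorem \ref{thm:generalclt}(ii) in place of the free CLT, which gives $\varphi(z^{m-r})+o(1)$; here the ratio $\lfloor\eta_n\rfloor/\eta_n\to1$ is absorbed. The uniform bound (\ref{pf111}) holds unchanged, since it involves only $N_n$. Resumming the binomial expansion then yields $\mathbb{E}[(\delta_n\tilde N_n)^l(\delta_n\tilde N_n+\sqrt{1-\delta_n^2}z)^m]+o(1)$. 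Because $z$ is independent of $\tilde N_n\cdot1_\mathcal{A}$ and everything commutes, this equals $\tilde\varphi[(\delta_n\tilde N_n\cdot1_\mathcal{A}+\sqrt{1-\delta_n^2}z)^m(\delta_n\tilde N_n)^l]$.

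For part (ii), with $\alpha=0$, I would follow the proof of Theorem \ref{thm31}(ii) directly. The comparison quantity is $\mu_m''=\mathbb{E}[(N_n/\eta_n)^{l+m/2}]\varphi[(\mathcal{S}_{\eta_n}/\sqrt{\eta_n}\theta)^m]$, which is $\mathbb{E}[(N_n/\eta_n)^{l+m/2}]\varphi(z^m)+o(1)$ by Theorem \ref{thm:generalclt}(ii) and (\ref{thm31cond2}). The difference from the target is a sum over $\pi\in\mathcal{P}(m)$ without singletons, since $c_1(a_1)=0$. Pair partitions contribute zero because $|\pi|=m/2$ for them. Every other such $\pi$ has $|\pi|<m/2$, so its term is bounded by a constant times $\eta_n^{|\pi|-m/2}$, multiplied by moments of $N_n/\eta_n$ that are bounded by (\ref{thm31cond2}); hence it is $o(1)$.

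The main obstacle is bookkeeping rather than a new idea. One must check that nothing in the estimates of Section \ref{proof} used non-crossingness beyond the block-size counting. One must also check that the classical multiplicative cumulant expansion applies to the commuting variables $\mathcal{S}_i$ in $(\mathcal{A},\varphi)$, as noted after (\ref{eq:classicalmoment}). Finally, the CLT must be applied along the non-integer sequence $\eta_n$ via $\lfloor\eta_n\rfloor$.
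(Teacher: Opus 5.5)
Your proposal is correct and follows the route the paper itself indicates. The paper omits the proof of Theorem \ref{thm51}, saying only that it repeats the proof of Theorem \ref{thm31} with $\mathcal{P}(m)$, classical cumulants and Theorem \ref{thm:generalclt}(ii) in place of $NC(m)$, free cumulants and the free clt. Your checks are exactly what make this transfer work: $\#1^\pi+2(|\pi|-\#1^\pi)\le m$ holds for all partitions, and $|\pi|<m/2$ holds for every singleton-free non-pair partition. You should also state explicitly that $\sigma_n^2=\eta_n\theta^2+\gamma_n^2\alpha^2$ still holds, because the computation in (\ref{smomSN}) uses only $\varphi(a_ja_k)=\varphi(a_j)\varphi(a_k)$ for $j\neq k$.
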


In view of Theorem \ref{thm51}, results analogous to Corollaries \ref{corr41}-\ref{corr45} can also be formulated and proved for independent variables. We omit the details. In this setting, the limiting variable involves $z$ instead of $s$. 
\begin{remark}
    If $\tilde{N}_n$ is asymptotically Gaussian, then under the assumptions of both Corollaries \ref{corr32} and \ref{corr33}, $\tilde{\mathcal{S}}_{N_n}$ in Theorem \ref{thm51} converges in $*$-distribution to $z$. 
\end{remark}
\vskip5pt

\noindent {\bf Half independent variables.}
The notion of half-independence was defined in \cite{Banica2012}. In \cite{Bose2011}, the relation of half independence with half cumulants was obtained. Let $I$ be an indexing set. Non-commuting variables  $\{a_i\}_{i\in I}\subset\mathcal{A}$ are called \textit{half commuting} if $a_ia_ja_k=a_ka_ja_i$ for any $i,j,k\in I$. This implies $\{a_i^2\}_{i\in I}$ commute. Any monomial $a=a_{i_1}a_{i_2}\dots a_{i_k}$ for $a_{i_r}\in\{a_i\}_{i\in I}$ is called \textit{symmetric} with respect to $\{a_i\}_{i\in I}$ if each variable $a_{i_r}$ appears an equal number of times at odd and even positions in $a$. If a monomial is not symmetric, then it is called \textit{non-symmetric}. Variables 
$\{a_i\}_{i\in I}$ are called \textit{half independent} if $\{a_i^2\}_{i\in I}$
are independent, and for any non-symmetric monomial $a_{i_1}a_{i_2}\dots a_{i_k}$ with respect to $\{a_i\}_{i\in I}$, $\varphi(a_{i_1}a_{i_2}\dots a_{i_k})=0$. The concept of half independence does not extend to sub-algebras.

Any subset $A\subset\mathbb{N}$ is called \textit{symmetric} if it contains an equal number of odd and even integers. Clearly, a symmetric set is of even cardinality. A partition of $A$ is called symmetric if all its blocks are symmetric. Let $E(2m)$ denote the set of symmetric partitions of $\{1,2,\dots,2m\}$, $m\in\mathbb{N}$.

Let $\{a_i\}_{i\in I}$ be variables in an NCP $(\mathcal{A},\varphi)$ such that $\varphi(a_{i_1}\dots a_{i_m})=0$ for all odd $m$ and $i_1,\dots, i_k\in I$. The half cumulants of $\{a_i\}_{i\in I}$, denoted by $\{h_m\}_{m\ge1}$ are defined by the following moment cumulant relation:
\begin{equation}\label{momhalfcum}
    \varphi(a_{i_1}\dots a_{i_m})=\sum_{\pi\in E(m)}h_\pi(a_{i_1},\dots, a_{i_m}),
\end{equation}
 where $\{h_\pi\}$ is the multiplicative extension of $\{h_m\}$. For any self-adjoint variable $a\in\mathcal{A}$, its $m$th cumulant is $h_m=h_m(a,\dots, a)$. Note that the half cumulants of odd order are zero. Let $a=\{a_i\}_{1\leq i\leq n}$ be self-adjoint variables and $k_j$ denote the number of times $a_j$ appears in the monomial $a_{i_1}\dots a_{i_{2m}}$. If these are half independent, then for any symmetric monomial $a$ with respect to $\{a_i\}_{1\leq i\leq n}$, whenever $k_i,k_j\ge2$ for some $1\leq i,j\leq n$,
$ h_{2m}(a_{i_1}\dots a_{i_{2m}})=0$.

In view of Theorem \ref{thm:generalclt} (iii), the following result holds. Using 
(\ref{momhalfcum}), its proof is along lines similar to that of  Theorem \ref{thm31}, and is omitted. 

\begin{theorem} 
Suppose $\{a_i\}$ of $\mathcal{S}_{N_n}$ in Theorem \ref{thm31} are half independent and identically distributed. Let $r_S$ be a standard symmetric Rayleigh variable in $\tilde{\mathcal{A}}$, independent of $\tilde{N}_n$. Then the following hold. \vskip5pt
\noindent(i) Under the conditions of Theorem \ref{thm31} (i), \begin{equation}\label{thm31classical}
        \tilde{\varphi}(\tilde{\mathcal{S}}_{N_n}^m(\delta_n\tilde{N}_n)^l)=\tilde{\varphi}[(\delta_n\tilde{N}_n\cdot1_\mathcal{A}+\sqrt{1-\delta_n^2}r_S)^m(\delta_n\tilde{N}_n)^l]+o(1)\ \ \text{for all}\ \ m,l\in\mathbb{N}\cup\{0\}.
    \end{equation}
      
\noindent (ii) Under the assumptions of Theorem \ref{thm31} (ii), 
    \begin{equation*}
        \tilde{\varphi}(\tilde{\mathcal{S}}_{N_n}^m(\frac{N_n}{\eta_n})^l)=\mathbb{E}[(\frac{N_n}{\eta_n})^{\frac{m}{2}+l}]\varphi(r_S^m)+o(1),\ \  \text{for all}\ \ m,l\in\mathbb{N}\cup\{0\}.
        \end{equation*}
    \end{theorem}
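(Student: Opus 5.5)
Both parts can follow the architecture of the proof of Theorem \ref{thm31}, with one change: the free clt is replaced by Theorem \ref{thm:generalclt}(iii), and the moment--free cumulant relation (\ref{momcumrelation}) is replaced by the half-cumulant relation (\ref{momhalfcum}).

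\textbf{Part (i).} Put $\mathcal{S}_i'=\sum_{j\le i}(a_j-\alpha)$ and condition on $N_n=i$. Expanding binomially, exactly as in the proof of Theorem \ref{thm31}(i), gives
\[
\mu_m=\sum_{r=0}^m\binom{m}{r}\mathbb{E}\Big[(\delta_n\tilde N_n)^l\big(\tfrac{\alpha}{\sigma_n}\big)^r(N_n-\eta_n)^r\,\varphi\big[(\mathcal{S}_{N_n}'/\sigma_n)^{m-r}\big]\Big].
\]
This is legitimate because $\alpha\cdot1_\mathcal{A}$ commutes with everything, so the binomial expansion is valid even though the $a_j$ do not commute.

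The centred variables $a_j-\alpha$ are still half independent. Their odd moments vanish: any monomial of odd length is non-symmetric, since a symmetric monomial has even length. Hence (\ref{momhalfcum}) applies to them, and by multilinearity and the vanishing of mixed half cumulants,
\[
\varphi\big[(\mathcal{S}_i')^{k}\big]=\sum_{\pi\in E(k)} i^{|\pi|}\,h_\pi(a_1-\alpha).
\]
This is a polynomial in $i$ of degree at most $k/2$, and pair partitions are the only ones attaining that degree. This polynomial structure is exactly what drove the estimates (\ref{pf1c1})--(\ref{pf1c3}).

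Next, define $\mu_m'$ by replacing $i$ with $\lfloor\eta_n\rfloor$ inside $\varphi[(\mathcal{S}_i')^{m-r}]$. The difference $\mu_m-\mu_m'$ is then a finite combination of terms of the form
\[
\sigma_n^{-(m+l)}\,\mathbb{E}\Big[(N_n-\eta_n)^{r+l}\big(N_n^{|\pi|}-\lfloor\eta_n\rfloor^{|\pi|}\big)\Big],\qquad |\pi|\le (m-r)/2.
\]
Writing $N_n=(N_n-\eta_n)+\eta_n$ and using $\mathbb{E}|N_n-\eta_n|^k=O(\sigma_n^k)$ together with $\eta_n\le\sigma_n^2/\theta^2$, each term is $o(1)$. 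The count is as before: every factor of $N_n-\eta_n$ contributes $O(\sigma_n)$ and every $\eta_n$ contributes $O(\sigma_n^2)$, but the difference $N_n^{|\pi|}-\lfloor\eta_n\rfloor^{|\pi|}$ costs at least one power of $\eta_n$ relative to the maximal exponent. (If $\theta=0$ the sum is trivially degenerate, so assume $\theta>0$ as implicitly in Theorem \ref{thm31}.)

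For $\mu_m'$, factor out $(\sqrt{\eta_n}\theta/\sigma_n)^{m-r}=(1-\delta_n^2)^{(m-r)/2}$ as in (\ref{pf19}). Theorem \ref{thm:generalclt}(iii), applied to the normalized variables $(a_j-\alpha)/\theta$, gives $\varphi[(\mathcal{S}_{\lfloor\eta_n\rfloor}'/\sqrt{\eta_n}\theta)^{k}]=\varphi(r_S^k)+o(1)$. The remaining coefficients are bounded uniformly in $n$ by the bound (\ref{pf111}), so these $o(1)$ errors stay $o(1)$ after summing. Recombining the binomial sum and using that $r_S$ is independent of $\tilde N_n$ yields (\ref{thm31classical}).

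\textbf{Part (ii).} Here $\alpha=0$, and the comparison is with
\[
\mu_m''=\mathbb{E}\big[(N_n/\eta_n)^{l+m/2}\big]\,\varphi\big[(\mathcal{S}_{\eta_n}/\sqrt{\eta_n}\theta)^m\big].
\]
Expanding via (\ref{momhalfcum}), the partitions in $E(m)$ with $|\pi|=m/2$ are the symmetric pairings. These contribute identically to both sides, because $i^{m/2}/\eta_n^{m/2}$ matches the factor $(N_n/\eta_n)^{m/2}$ in $\mu_m''$. The remaining partitions have $|\pi|<m/2$, so their contribution is bounded by $\eta_n^{|\pi|-m/2}$ times moments of $N_n/\eta_n$, which are bounded by (\ref{thm31cond2}); hence it is $o(1)$. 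The main term then follows from Theorem \ref{thm:generalclt}(iii).

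\textbf{Main obstacle.} The delicate step is justifying that half cumulants behave additively over the sum, i.e.\ that the expansion of $\varphi[(\mathcal{S}_i')^k]$ really is $\sum_{\pi\in E(k)}i^{|\pi|}h_\pi$. Mixed half cumulants vanish only under the stated multiplicity conditions. I would handle this by comparing, block by block, with the moment computation of \cite{Bose2011} in the proof of Theorem \ref{thm:generalclt}(iii). If exact additivity fails, it suffices to show that blocks containing distinct indices contribute at most a polynomial in $i$ of degree strictly below $k/2$. That weaker statement is all the estimates above use.
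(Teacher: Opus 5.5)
Your proposal is correct and is essentially the route the paper indicates: the paper omits the proof, saying it follows that of Theorem~\ref{thm31} with (\ref{momhalfcum}) and Theorem~\ref{thm:generalclt}(iii) replacing the free tools. Your flagged obstacle is also harmless, since expanding $\varphi[(\mathcal{S}_i')^k]$ directly over index patterns, using half commutativity and the independence of the squares, already shows it is a polynomial in $i$ of degree at most $k/2$ whose top-degree part comes only from symmetric pairings. Note, however, that your own remark that odd-length monomials are non-symmetric gives $\alpha=\varphi(a_1)=0$ for half independent $a_j$, so Part (i) as literally stated is vacuous; your centring step has content only if the hypothesis is read as half independence of the $a_j-\alpha$.
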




Convergence results analogous to  Corollaries \ref{crr31}-\ref{corr33} hold for random sums of half independent variables under similar conditions.
The limit variable is now replaced by $r_S$.

\section{$*$-convergence of randomly indexed variable}\label{sec6}
In \cite{Anscombe1952} Anscombe established an asymptotic distribution result for a randomly observed rv in the context of sequential estimation.
A consequence of this result is a clt for a random sum of rvs. Here, we present some scaling limit results for randomly picked self-adjoint variables.
Let $\{a_n\}_{n\ge1}$ be a sequence of self-adjoint variables from an NCP $(\mathcal{A},\varphi)$, where $\varphi$ is \textit{tracial}. This 
traciality is required because for the next result, we will need the following inequality, which holds under traciality.

For any $a\in \mathcal{A}$, let $|a|^2:=a^*a$. Let $q_1,\dots,q_m\in\mathbb{N}$ be such that $\frac{1}{q_1}+\dots+\frac{1}{q_m}=1$. Then, for any $a_1,\dots,a_m\in\mathcal{A}$, the following 
generalized H{\"o}lder's inequality holds:
\begin{equation}\label{gholder}
    |\varphi(a_1\dots a_m)|\leq \varphi(|a_1|^{q_1})^{\frac{1}{q_1}}\dots\varphi(|a_m|^{q_m})^{\frac{1}{q_m}}.
\end{equation}

Suppose there exists an $\alpha\in\mathbb{R}$, and a sequence $\{K_n\}_{n\ge1}$ of positive numbers such that,
\vskip5pt
\noindent \textbf{Condition C1.} As $n\rightarrow\infty$, 
$(a_n-\alpha)/K_n\stackrel{*}{\to} l$ 
which is self-adjoint in some NCP $(\mathcal{B}, \Bar{\varphi})$. 
\vskip5pt
\noindent \textbf{Condition C2.} For any small $\epsilon, \epsilon'>0$, there exist a large $M\in\mathbb{N}$ such that for any $n>M$ and $m\in\mathbb{N}$,
\begin{equation*}
    \Big|\varphi[(\frac{a_{n'}-a_{n}}{K_n})^m]\Big|<\epsilon, \ \ \text{whenever}\ \ |n'-n|<n\epsilon'.
\end{equation*}
\begin{theorem}\label{thm61}
    Let $\{w_n\}_{n\ge1}$ be an increasing sequence of positive integers with $w_n\rightarrow\infty$. Let $\{N_n\}$ be non-negative integer-valued rvs 
    such that $N_n/w_n\stackrel{\mathbb{P}}{\to} 1$.
    Let $\{a_n\}$ be self-adjoint variables that satisfy Conditions \textbf{C1} and 
    \textbf{C2}. Let $\{(K_{N_n}/K_{w_n})^m\}$ be uniformly integrable for all $m\in\mathbb{N}$. Then,
    $\frac{a_{N_n}-\alpha}{K_{w_n}}\stackrel{*}{\to} l$.
\end{theorem}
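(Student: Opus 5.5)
We have to show that for each fixed $m\in\mathbb{N}$,
\[
\tilde{\varphi}\Big[\Big(\frac{a_{N_n}-\alpha}{K_{w_n}}\Big)^m\Big]=\sum_{i\ge0}\mathbb{P}\{N_n=i\}\Big(\frac{K_i}{K_{w_n}}\Big)^m\varphi\Big[\Big(\frac{a_i-\alpha}{K_i}\Big)^m\Big]\longrightarrow \bar{\varphi}(l^m).
\]
The plan is an Anscombe-type decomposition. Write $b_i:=(a_i-\alpha)/K_i$, so that Condition C1 reads $\varphi(b_i^m)\to\bar\varphi(l^m)$. Fix $\epsilon,\epsilon'>0$ and split the sum over $i$ into the event $A_n=\{|N_n-w_n|<w_n\epsilon'\}$ and its complement.

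\textbf{Step 1 (the good event).} On $A_n$ we compare $a_{N_n}$ with $a_{w_n}$. Write
\[
\frac{a_i-\alpha}{K_{w_n}}=\frac{a_{w_n}-\alpha}{K_{w_n}}+\frac{a_i-a_{w_n}}{K_{w_n}},
\]
expand the $m$th power into noncommutative monomials, and apply the generalized H\"older inequality (\ref{gholder}); this is where traciality is used. Each mixed monomial with $j\ge1$ difference factors is bounded by
\[
\varphi\big(|b_{w_n}|^{m}\big)^{(m-j)/m}\,\varphi\Big(\Big|\frac{a_i-a_{w_n}}{K_{w_n}}\Big|^{m}\Big)^{j/m}.
\]
Since everything is self-adjoint, $|x|^{m}$ can be handled via even powers: for $m$ odd, apply H\"older with exponent $2m$, or use $\varphi(x^{2k})$. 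Condition C2, applied with $n=w_n$ and $n'=i$ (valid because $|i-w_n|<w_n\epsilon'$), makes the second factor at most $\epsilon^{j/m}$ uniformly in such $i$, once $w_n>M$. The first factor is bounded by C1. Hence
\[
\varphi\Big[\Big(\frac{a_i-\alpha}{K_{w_n}}\Big)^m\Big]=\varphi(b_{w_n}^m)+O(\epsilon^{1/m})
\]
uniformly over $i$ in $A_n$. Multiplying by $\mathbb{P}\{N_n=i\}$ and summing gives $\mathbb{P}(A_n)\big(\bar\varphi(l^m)+o(1)\big)+O(\epsilon^{1/m})$. Since $\mathbb{P}(A_n)\to1$ because $N_n/w_n\stackrel{\mathbb{P}}{\to}1$, this part tends to $\bar\varphi(l^m)$ up to $O(\epsilon^{1/m})$.

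\textbf{Step 2 (the bad event).} On $A_n^c$ we use the representation with $K_i$:
\[
\Big|\sum_{i\in A_n^c}\mathbb{P}\{N_n=i\}\Big(\frac{K_i}{K_{w_n}}\Big)^m\varphi(b_i^m)\Big|\le \sup_i|\varphi(b_i^m)|\;\mathbb{E}\Big[\Big(\frac{K_{N_n}}{K_{w_n}}\Big)^m\mathbb{I}_{A_n^c}\Big].
\]
The supremum is finite because $\varphi(b_i^m)$ converges by C1, so all but finitely many terms are bounded, and finitely many finite values are harmless. The expectation tends to $0$ because $\{(K_{N_n}/K_{w_n})^m\}$ is uniformly integrable and $\mathbb{P}(A_n^c)\to0$. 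Letting $n\to\infty$ and then $\epsilon\to0$ finishes the argument.

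\textbf{Main obstacle.} The delicate step is Step 1: controlling the noncommutative binomial expansion through H\"older while extracting uniformity from C2. In particular, C2 controls $\varphi(x^m)$ and not $\varphi(|x|^q)$. I would handle this by using only even exponents $q=2k$, where $|x|^{2k}=x^{2k}$ for self-adjoint $x$, and by applying H\"older with all $q_j$ equal to a common even number $\ge m$ (padding with $1_\mathcal{A}$ factors so that the reciprocals sum to $1$). I also need the moments of $b_{w_n}$ of that even order to remain bounded, which follows from C1.
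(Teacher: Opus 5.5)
Your proposal is correct and follows essentially the same route as the paper: split according to whether $|N_n-w_n|<w_n\epsilon'$, compare $a_i$ with $a_{w_n}$ on the good event via the binomial expansion, the generalized H\"older inequality (\ref{gholder}) and Conditions \textbf{C1}--\textbf{C2}, and handle the bad event using $\sup_i|\varphi(b_i^m)|<\infty$ together with uniform integrability of $(K_{N_n}/K_{w_n})^m$. Two parts of your write-up are cleaner than the paper's version, which writes the good-event error loosely as $o(1)$: you run the H\"older step through even exponents, so that \textbf{C2} (which controls $\varphi(x^{2k})=\varphi(|x|^{2k})$) applies, and you keep an explicit $O(\epsilon^{1/m})$ error before letting $n\to\infty$ and then $\epsilon\to0$.
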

\begin{proof}
    Suppose $\epsilon$ and $\epsilon'$ are as in Condition \textbf{C2}. For $m\in\mathbb{N}$, 
    \begin{align}
        \tilde{\varphi}[(\frac{a_{N_n}-\alpha}{K_{w_n}})^m]&=\sum_{i=0}^{\infty}\mathbb{P}\{N_n=i\}\varphi[(\frac{a_i-\alpha}{K_{w_n}})^m]\nonumber\\
        &=\sum_{|i-w_n|\leq w_n\epsilon'}\mathbb{P}\{N_n=i\}\varphi[(\frac{a_i-\alpha}{K_{w_n}})^m]+\sum_{|i-w_n|> w_n\epsilon'}\mathbb{P}\{N_n=i\}\varphi[(\frac{a_i-\alpha}{K_{w_n}})^m].\label{pf511}
    \end{align}
Using (\ref{gholder}), for any $k_1,l_1,\dots,k_m,l_m\geq0$ such that not all $k_i$'s are zero, from Conditions \textbf{C1} and \textbf{C2}, we get
    \begin{equation*}
        \Big|\varphi\Big[(\frac{a_{n'}-a_{w_n}}{K_{w_n}})^{k_1}(\frac{a_{w_n}-\alpha}{K_{w_n}})^{l_1}\dots (\frac{a_{n'}-a_{w_n}}{K_{w_n}})^{k_m}(\frac{a_{w_n}-\alpha}{K_{w_n}})^{l_m}\Big]\Big|<\epsilon,\ \text{whenever}\ |n'-w_n|<w_n\epsilon',
    \end{equation*}
    for sufficiently large $w_n$, where we have used the fact that $w_n$ is increasing. Thus, as $n\rightarrow\infty$, on the set $|i-w_n|\leq w_n\epsilon'$, we have
    \begin{equation}\label{pf512}
        \varphi[(\frac{a_i-\alpha}{K_{w_n}})^m]=\varphi[(\frac{a_i-a_{w_n}+a_{w_n}-\alpha}{K_{w_n}})^m]=\varphi[(\frac{a_{w_n}-\alpha}{K_{w_n}})^m]+ o(1).
    \end{equation}
    Moreover, under Condition \textbf{C1}, there exists a constant $C_m>0$ such that $|\varphi[(\frac{a_i-\alpha}{K_{i}})^m]|\leq C_m$ for each $m\in\mathbb{N}$. As $\{(\frac{K_{N_n}}{K_{w_n}})^m\}$ is uniformly integrable for each $m\in\mathbb{N}$,  for any $\epsilon>0$, there exist a $\epsilon'>0$ such that for any set $B$ with $\mathbb{P}(B)<\epsilon'$, we have $\sup_{n}\mathbb{E}\{(\frac{K_{N_n}}{K_{w_n}})^m\}\mathbb{I}_{B}<\epsilon$. Note that for large $n$, $\mathbb{P}\{|N_n-w_n|>w_n\epsilon'\}<\epsilon$, as $\frac{N_n}{w_n}\stackrel{\mathbb{P}}{\to}1$.
Hence,
    \begin{align}
        \sum_{|i-w_n|> w_n\epsilon'}\mathbb{P}\{N_n=i\}\varphi[(\frac{a_i-\alpha}{K_{w_n}})^m]&\leq C_m\sum_{|i-w_n|> w_n\epsilon'}\mathbb{P}\{N_n=i\}(\frac{K_i}{K_{w_n}})^m\nonumber\\
        &=C_m\mathbb{E}\Big[(\frac{K_{N_n}}{K_{w_n}})^m\Big]\mathbb{I}\{|N_n-w_n|>w_n\epsilon'\}\nonumber\\
        &=o(1).\label{pf513}
    \end{align}
    On substituting (\ref{pf512}) and (\ref{pf513}) in (\ref{pf511}), we get
    \begin{equation*}
        \tilde{\varphi}[(\frac{a_{N_n}-\alpha}{K_{w_n}})^m]=\varphi[(\frac{a_{w_n}-\alpha}{K_{w_n}})^m]+ o(1).
    \end{equation*}
    As $w_n$ is increasing to $\infty$, the proof follows using  Condition \textbf{C1}.
\end{proof}

The following result is an immediate consequence of Theorem \ref{thm61}.
\begin{corollary}\label{crrr61}
    Let $\{a_n\}$ be self-adjoint variables, satisfying Condition \textbf{C2} with $K_n=1$, such that $a_n\stackrel{*}{\to} l$.
    Let $N_n$ be non-negative rvs such that $N_n/n\stackrel{\mathbb{P}}{\to}1$, as $n\rightarrow\infty$.
    Then, $a_{N_n}\stackrel{*}{\to} l$.
\end{corollary}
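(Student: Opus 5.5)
The plan is to specialize Theorem \ref{thm61} to $\alpha=0$, $K_n\equiv 1$ and $w_n=n$, and check its hypotheses one at a time. Here $\{w_n\}=\{n\}$ is an increasing sequence of positive integers tending to $\infty$. The assumption $N_n/n\stackrel{\mathbb{P}}{\to}1$ is exactly the hypothesis $N_n/w_n\stackrel{\mathbb{P}}{\to}1$.

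Next, Condition \textbf{C1} with $\alpha=0$ and $K_n=1$ reads $a_n\stackrel{*}{\to} l$, which is assumed. Condition \textbf{C2} with $K_n=1$ is assumed directly. The uniform integrability hypothesis involves $(K_{N_n}/K_{w_n})^m$, and with $K_n\equiv1$ this is the constant $1$ for every $n$ and $m$, so uniform integrability is trivial.

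Theorem \ref{thm61} then gives $(a_{N_n}-0)/K_{w_n}=a_{N_n}\stackrel{*}{\to} l$, which is the claim.

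The only point needing care is the standing assumption of Section \ref{sec6} that $\varphi$ is tracial, which Theorem \ref{thm61} uses through (\ref{gholder}). I will read the corollary as stated within that section's setting, so traciality is inherited. One small gap remains: the corollary says only that $N_n$ are ``non-negative rvs'', while $a_{N_n}$ is defined only for integer-valued $N_n$. I would state explicitly that $N_n$ is integer valued, as in Theorem \ref{thm61}. Beyond these conventions there is no real obstacle; the proof is a direct specialization.
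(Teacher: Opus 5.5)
Your proposal is correct and is exactly the paper's route: the paper calls this corollary an immediate consequence of Theorem \ref{thm61}, and your specialization $\alpha=0$, $K_n\equiv 1$, $w_n=n$ is precisely that deduction, with the uniform integrability hypothesis becoming trivial. Your caveats about traciality of $\varphi$ and about $N_n$ needing to be integer valued are appropriate, since both are inherited from the setting of Theorem \ref{thm61}.
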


The following is a clt-type result for a random sum of free self-adjoint variables.
\begin{corollary}\label{cr61}
    Let $\{b_i\}$ be a sequence of self-adjoint variables with $\varphi(b_i)=0$, $\varphi(b_i^2)=1$, and $\sup_{i}|\varphi(b_i^m)|<\infty$ for each $m\in\mathbb{N}$. Set $a_n=b_1+\dots+b_n$ for each $n\in\mathbb{N}$. Let $\{w_n\}$ be a sequence of positive integers increasing to infinity. Let $\{N_n\}$ be non-negative integer valued rvs such that 
    $N_n/w_n\stackrel{\mathbb{P}}{\to}1$,
    and $\{(\frac{N_n}{w_n})^{m}\}$ be uniformly integrable for each $m\in\mathbb{N}$. 
    Then the following hold. \vskip5pt
    
    \noindent (i) If $\{b_i\}$ are free then 
    $\frac{a_{N_n}}{\sqrt{w_n}}\stackrel{*}{\to} s$.
    \vskip5pt
    \noindent (ii) If $\{b_i\}$ are independent then 
    $\frac{a_{N_n}}{\sqrt{w_n}}\stackrel{*}{\to} z$.
    \vskip5pt
    \noindent (iii) If $\{b_i\}$ are half-independent then 
    $\frac{a_{N_n}}{\sqrt{w_n}}\stackrel{*}{\to} r_S$.
\end{corollary}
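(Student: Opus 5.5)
The plan is to apply Theorem \ref{thm61} with $\alpha=0$, $K_n=\sqrt{n}$ and the given $w_n$, so that $K_{w_n}=\sqrt{w_n}$ and $K_{N_n}/K_{w_n}=\sqrt{N_n/w_n}$. Three hypotheses must be checked: Condition \textbf{C1}, Condition \textbf{C2}, and uniform integrability of $\{(K_{N_n}/K_{w_n})^m\}$. The conclusion $a_{N_n}/\sqrt{w_n}\stackrel{*}{\to} l$ then gives the three cases, with $l=s$, $z$, $r_S$.

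\textbf{C1} and the uniform integrability are direct. For C1, $a_n/\sqrt{n}=(b_1+\cdots+b_n)/\sqrt n$ converges in $*$-distribution to $s$, $z$ or $r_S$ in cases (i), (ii), (iii) by Theorem \ref{thm:generalclt}. For uniform integrability, $(K_{N_n}/K_{w_n})^m=(N_n/w_n)^{m/2}$ is dominated by $1+(N_n/w_n)^{\lceil m/2\rceil}$, which is uniformly integrable by hypothesis.

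The main work is \textbf{C2}. For $n'>n$ (the case $n'<n$ is symmetric), $a_{n'}-a_n=b_{n+1}+\cdots+b_{n'}$ is a sum of $k=n'-n<n\epsilon'$ variables of the same independence type (free, independent, half independent) as the original family. I would expand $\varphi[(b_{n+1}+\cdots+b_{n'})^m]$ via the appropriate moment--cumulant relation: (\ref{momcumrelation}) with free cumulants in case (i), classical cumulants in case (ii), and half cumulants (\ref{momhalfcum}) in case (iii). Mixed cumulants vanish, and first cumulants vanish because $\varphi(b_i)=0$. So only partitions without singletons contribute. Each block either is a cumulant of a single $b_j$, which is bounded uniformly in $j$ using $\sup_i|\varphi(b_i^m)|<\infty$ and the moment--cumulant inversion, or (in the half independent case) is zero unless it meets the symmetry constraints. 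A partition with $|\pi|$ blocks contributes at most $C_m k^{|\pi|}\le C_m k^{m/2}$, hence
\[
\Big|\varphi\Big[\Big(\frac{a_{n'}-a_n}{\sqrt n}\Big)^m\Big]\Big|\le C_m\Big(\frac{k}{n}\Big)^{m/2}\le C_m(\epsilon')^{m/2}.
\]
For $m=1$ this quantity is exactly $0$. For each $m\ge 2$, it is below $\epsilon$ once $\epsilon'$ is small.

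The obstacle is that C2 as written demands a single $\epsilon'$ working for \emph{all} $m$, while $C_m$ grows with $m$. I would handle this by noting that the proof of Theorem \ref{thm61} only uses C2 for finitely many exponents at a time (those up to the fixed $m$ being computed, via (\ref{gholder})). So it suffices to choose $\epsilon'=\epsilon'(\epsilon,m)$ for each fixed moment order, and I would state this reading explicitly. In the half independent case, the cumulant bound will be argued via the estimate $|\varphi(S_k^m)|\le C_m k^{m/2}$, obtained from (\ref{momhalfcum}) and the vanishing of odd-order and non-symmetric terms. The final ingredient is the traciality needed for (\ref{gholder}): the variables are assumed to lie in a tracial space as in Section \ref{sec6}, and the algebras generated by free or independent families are tracial.
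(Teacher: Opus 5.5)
Your proposal is correct and follows the paper's route: \textbf{C1} from Theorem \ref{thm:generalclt}, \textbf{C2} with $K_n=\sqrt{n}$ via the free, classical or half moment--cumulant expansions, and then Theorem \ref{thm61}. The paper only says \textbf{C2} can be ``easily shown'', and your proposal supplies what it omits: the explicit bound $C_m(k/n)^{m/2}$, the uniform integrability of $(N_n/w_n)^{m/2}$, and letting $\epsilon'$ depend on $m$ (enough, since the proof of Theorem \ref{thm61} uses \textbf{C2} for only finitely many exponents at each moment order, and needed because \textbf{C2} read literally with one $\epsilon'$ for all $m$ fails, e.g.\ for Gaussian summands in case (ii)).
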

\begin{proof}
    Theorem \ref{thm:generalclt} implies that $a_n/\sqrt{n}$ in (i), (ii), and (iii) converges in $*$-distribution to $s$, $z$ and $r_S$, respectively. Moreover, using the respective moment cumulant formula along with the joint cumulants properties, it can be easily shown that $\{a_n\}$ with $\{K_n=\sqrt{n}\}$ satisfies Condition \textbf{C2} for all three cases.
    Thus, the proof follows from Theorem \ref{thm61}.
\end{proof}

\section{Random sum with random scaling}\label{sec7}
 The free clt with random scaling is easy to derive. 

\begin{theorem}\label{thm71}
    Let $\{a_n\}$ be self-adjoint identically distributed  variables in an NCP $(\mathcal{A}, \varphi)$ with $\varphi(a_1)=0$ and $\varphi(a_1^2)=1$, and $\sup_{n}|\varphi(a_n^k)|<\infty$ for each $k\in\mathbb{N}$. Set $\mathcal{S}_k=a_1+\dots+a_k$, $k\in\mathbb{N}$. Let $\{N_n\}$ be positive integer-valued rvs, independent of $\{a_n\}$, such that $N_n/n\stackrel{\mathbb{P}}{\to}K>0$, a constant, as $n\rightarrow\infty$.\vskip5pt
    
    \noindent (i) If $\{a_i\}$ are free then 
    $\frac{\mathcal{S}_{N_n}}{\sqrt{N_n}}
    \stackrel{*}{\to} s$.
\vskip 3pt
    \noindent (ii) If $\{a_i\}$ are independent then $\frac{\mathcal{S}_{N_n}}{\sqrt{N_n}}\stackrel{*}{\to} z$. 
    \vskip3pt
    \noindent (iii) If $\{a_i\}$ are half independent then $\frac{\mathcal{S}_{N_n}}{\sqrt{N_n}}\stackrel{*}{\to} r_S$. 

   If $a_1$ has a probability measure, then the probability measures of $\frac{\mathcal{S}_{N_n}}{\sqrt{N_n}}$ in (i), (ii) and (iii) exist, and converge weakly to the probability measure of the standard 
    semi-circle, standard Gaussian and standard symmetric Rayleigh measures, respectively. 
\end{theorem}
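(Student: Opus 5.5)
The plan is to condition on $N_n$ and reduce everything to the deterministic-index clts of Theorem \ref{thm:generalclt}. By the definition of $\tilde{\varphi}$ in Section \ref{sec3}, for each $m\in\mathbb{N}$,
\begin{equation*}
\tilde{\varphi}\Big[\Big(\frac{\mathcal{S}_{N_n}}{\sqrt{N_n}}\Big)^m\Big]=\sum_{i=1}^{\infty}\mathbb{P}\{N_n=i\}\,\varphi\Big[\Big(\frac{\mathcal{S}_i}{\sqrt{i}}\Big)^m\Big]=\mathbb{E}\,g_m(N_n),
\end{equation*}
where $g_m(i):=\varphi[(\mathcal{S}_i/\sqrt{i})^m]$ is a deterministic sequence. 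The random scaling $\sqrt{N_n}$ exactly cancels the random number of summands, so, unlike Theorem \ref{thm31} or Theorem \ref{thm61}, no uniform integrability of $N_n/n$ is needed.

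First, I would show that $g_m$ is bounded and convergent. By Theorem \ref{thm:generalclt}, $g_m(i)\to c_m$ as $i\to\infty$, where $c_m$ is $\varphi(s^m)$, $\varphi(z^m)$ or $\varphi(r_S^m)$ in cases (i), (ii), (iii). A convergent sequence is bounded, so $\sup_i|g_m(i)|=:C_m<\infty$. Alternatively, boundedness follows directly from the moment--cumulant expansion: for each fixed $i$ only finitely many partitions contribute, each term is controlled by $\sup_n|\varphi(a_n^k)|$, and the normalized sum is bounded by $i^{|\pi|-m/2}\le1$ because singleton blocks vanish.

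Second, I would split on the event $\{N_n\le M\}$. Given $\epsilon>0$, choose $M$ such that $|g_m(i)-c_m|<\epsilon$ for all $i>M$. Then
\begin{equation*}
|\mathbb{E}g_m(N_n)-c_m|\le \epsilon+(C_m+|c_m|)\,\mathbb{P}\{N_n\le M\}.
\end{equation*}
Since $N_n/n\stackrel{\mathbb{P}}{\to}K>0$, we have $\mathbb{P}\{N_n\le M\}\le\mathbb{P}\{N_n/n<K/2\}\to0$ for every fixed $M$. Letting $n\to\infty$ and then $\epsilon\to0$ gives $\tilde{\varphi}[(\mathcal{S}_{N_n}/\sqrt{N_n})^m]\to c_m$ for all $m$, which is the required $*$-convergence, since only one self-adjoint variable is involved. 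The hypothesis $N_n/n\to K$ is therefore used only to ensure $N_n\to\infty$ in probability.

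Third, for the weak convergence statement: if $a_1$ has a probability measure $\mu$, then $\mathcal{S}_i/\sqrt{i}$ has a law for each $i$. In the free case this is the rescaled $\mu^{\boxplus i}$, in the independent case it is $\mu^{*i}$, and in the half-independent case it comes from the construction in \cite{Bose2011}. The law of $\mathcal{S}_{N_n}/\sqrt{N_n}$ is then the mixture $\sum_i\mathbb{P}\{N_n=i\}\mu_i$. Its moments are exactly $\mathbb{E}g_m(N_n)$, which converge to the moments of $\mu_s$, $\mu_z$ and $\mu_{r_S}$, each of which is determined by its moments. Lemma \ref{lem:momentconv} then gives the weak convergence.

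The only delicate step is justifying that the half-independent sum $\mathcal{S}_i/\sqrt{i}$ actually carries a probability measure. I would take this from \cite{Bose2011}, or else state the weak-convergence conclusion conditionally on the existence of such measures, as the statement implicitly does.
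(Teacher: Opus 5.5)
Your proposal is correct and follows essentially the paper's argument: express $\tilde{\varphi}[(\mathcal{S}_{N_n}/\sqrt{N_n})^m]$ as the mixture $\sum_i\mathbb{P}\{N_n=i\}\varphi[(\mathcal{S}_i/\sqrt{i})^m]$, use the clts of Theorem \ref{thm:generalclt} together with boundedness of these deterministic moments, discard an event of vanishing probability, and finish with Lemma \ref{lem:momentconv}. The only difference is that you discard $\{N_n\le M\}$ instead of the paper's $\{|N_n-nK|>n\epsilon\}$; this makes the uniformity of the $o(1)$ term explicit and shows that only $N_n\stackrel{\mathbb{P}}{\to}\infty$ is needed.
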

\begin{proof}
    (i) For $m\in\mathbb{N}$, and for any $\epsilon>0$,
    \begin{align*}
        \tilde{\varphi}[(\frac{\mathcal{S}_{N_n}}{\sqrt{N_n}})^m]&=\sum_{i=1}^{\infty}\mathbb{P}\{N_n=i\}\varphi[(\frac{\mathcal{S}_{i}}{\sqrt{i}})^m]\\
        &=\sum_{i:|i-nK|>n\epsilon}\mathbb{P}\{N_n=i\}\varphi[(\frac{\mathcal{S}_{i}}{\sqrt{i}})^m]+\sum_{i:|i-nK|\leq n\epsilon}\mathbb{P}\{N_n=i\}\varphi[(\frac{\mathcal{S}_{i}}{\sqrt{i}})^m]\\
        &=T_1+T_2 \ \ \text{(say)}.
    \end{align*}
    Note that when $|i-nK|\leq n\epsilon$, $i$ and $n$ are of the same order of magnitude. Thus, as $n\rightarrow\infty$, from the free clt (see (\ref{pf110})), we have $\varphi[(\frac{\mathcal{S}_{i}}{\sqrt{i}})^m]=\varphi(s^m)+o(1)$, where $s$ is a standard semi-circle variable. Moreover, there exists a constant $K'>0$ such that $|\varphi[(\frac{\mathcal{S}_{i}}{\sqrt{i}})^m]|\leq K'$. Hence,
                \[T_1\leq K' \mathbb{P}\{|N_n-nK|>n\epsilon\}\to 0.\]
   On the other hand, using the fact that $N_n/n\stackrel{\mathbb{P}}{\to} K$, 
   \[T_2=\varphi(s^m)[\mathbb{P}\{|N_n-nK|\leq n\epsilon\}+o(1)]\to \varphi(s^m).\]
   This completes the proof of (i). In view of Theorem \ref{thm:generalclt} (ii) and (iii), the proofs of (ii) and (iii) follow similar lines to that of Part (i).
   
   Moreover, observe that the probability measure of $\frac{\mathcal{S}_{N_n}}{\sqrt{N_n}}$ does exist. 
   The remaining part follows immediately from Lemma \ref{lem:momentconv}. 
\end{proof}

\begin{remark}
  Let $\{a_i\}$ be a sequence of self-adjoint variables such that $\varphi(a_i)=0$, $\varphi(a_i^2)=1$ and $\sup_i|\varphi(a_i^k)|<\infty$ for each $k\in\mathbb{N}$. Then, from Theorem \ref{thm:generalclt}, $\mathcal{S}_n=(a_1+\dots+a_n)/\sqrt{n}$ converges in $*$-distribution whenever $\{a_i\}$ are free, independent or half independent. Moreover, it is not difficult to check that $\{\mathcal{S}_n\}$ satisfies Condition \textbf{C2} in all three cases, with $K_n=1$.  Thus, if $N_n/n\stackrel{\mathbb{P}}{\to}1$, then Theorem \ref{thm71} can also be obtained as an immediate consequence of Corollary \ref{crrr61}.
  
\end{remark}

The following result gives a random scaling convergence of a self-adjoint variable randomly observed from a sequence of identically distributed variables. Its proof is straightforward. Hence, we omit it. 
\begin{proposition}
    Let $\{a_j\}$ be a sequence of identical variables having a valid common probability measure $\nu$. Let $\{N_n\}$ be positive integer-valued rvs such that $N_n/n\stackrel{\mathbb{P}}{\to} K$. Then, as $n\rightarrow\infty$, the probability measure of $a_{N_n}/\sqrt{N_n}$ 
    converges weakly to $\delta_{\{0\}}$ (the Dirac measure at $0$). 
\end{proposition}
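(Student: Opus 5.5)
The plan is to reduce the statement to a claim about classical random variables. Since each $a_j$ has the common probability measure $\nu$ and $N_n$ is independent of $\{a_j\}$, the variable $a_{N_n}/\sqrt{N_n}$ in $(\tilde{\mathcal{A}},\tilde{\varphi})$ has the mixture law
\[
\nu_n(B)=\sum_{i=1}^{\infty}\mathbb{P}\{N_n=i\}\,\nu\big(\sqrt{i}\,B\big),
\]
where $\sqrt{i}B=\{\sqrt{i}x:x\in B\}$. Indeed, its $m$th moment is $\sum_i \mathbb{P}\{N_n=i\}\,i^{-m/2}\int x^m\,\nu(\mathrm{d}x)$, which is exactly the $m$th moment of this mixture. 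In other words, $\nu_n$ is the law of $X/\sqrt{N_n}$, where $X\sim\nu$ is a classical random variable independent of $N_n$. This is how I interpret ``the probability measure of $a_{N_n}/\sqrt{N_n}$''. If uniqueness of the moment problem is at issue, I would simply take this mixture as the measure, since the statement is about weak convergence and not about moments.

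With this identification, I would prove $X/\sqrt{N_n}\stackrel{\mathbb{P}}{\to}0$ directly. Fix $\epsilon>0$ and $\eta>0$. Choose $M$ with $\nu([-M,M]^c)<\eta$. Since $N_n/n\stackrel{\mathbb{P}}{\to}K>0$, we have $\mathbb{P}\{N_n< nK/2\}\to0$. Hence
\[
\mathbb{P}\{|X|/\sqrt{N_n}>\epsilon\}\le \eta+\mathbb{P}\{N_n<nK/2\}+\mathbb{I}\{M/\sqrt{nK/2}>\epsilon\}.
\]
The last two terms vanish for large $n$, so the $\limsup$ is at most $\eta$, and $\eta$ is arbitrary. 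Convergence in probability to the constant $0$ implies weak convergence of $\nu_n$ to $\delta_{\{0\}}$. I could equally argue by characteristic functions: $\int e^{\iota t x}\nu_n(\mathrm{d}x)=\mathbb{E}\,\hat\nu(t/\sqrt{N_n})$, which tends to $1$ by bounded convergence along $N_n\to\infty$ in probability.

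No moment assumptions on $\nu$ or uniform integrability of $N_n$ are needed, because the argument avoids Lemma \ref{lem:momentconv}. This is also why I would not try to prove $*$-convergence of moments. Moments of $a_{N_n}/\sqrt{N_n}$ involve $\mathbb{E}[N_n^{-m/2}]$ and $\varphi(a_1^m)$. These tend to $0$ by bounded convergence, since $N_n\ge1$ and $N_n\to\infty$ in probability, but only when $\nu$ has finite moments. So a moment route would work as an alternative under that extra assumption.

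The only delicate step is the first one: justifying that the law of $a_{N_n}/\sqrt{N_n}$ in the NCP is this mixture. That rests on the construction of $\tilde{\varphi}$ in Section \ref{sec3}, namely that $\tilde\varphi(f(a_{N_n}/\sqrt{N_n}))=\sum_i\mathbb{P}\{N_n=i\}\varphi(f(a_i/\sqrt i))$ for polynomial $f$. Everything after that is routine.
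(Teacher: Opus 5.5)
The paper gives no proof of this proposition (it calls it straightforward), so there is nothing to match against; your argument is correct and differs from the route the paper's framework suggests.

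\textbf{The route the paper's framework suggests.} This is the method of moments. By the definition of $\tilde{\varphi}$,
\[
\tilde{\varphi}[(a_{N_n}/\sqrt{N_n})^m]=\mathbb{E}[N_n^{-m/2}]\,\varphi(a_1^m).
\]
This tends to $0$ for every $m\ge 1$ by bounded convergence, since $N_n\ge 1$ and $N_n\to\infty$ in probability. Lemma \ref{lem:momentconv} then applies, because $\delta_{\{0\}}$ is determined by its moments.

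\textbf{Your route.} You identify the law with the mixture, i.e.\ the law of $X/\sqrt{N_n}$, and prove $X/\sqrt{N_n}\stackrel{\mathbb{P}}{\to}0$ from tightness of $\nu$. The identification is legitimate. The mixture has exactly the moments $\tilde{\varphi}[(a_{N_n}/\sqrt{N_n})^m]$, so whenever the paper's uniquely determined law of $a_{N_n}/\sqrt{N_n}$ exists, it is the mixture.

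\textbf{Comparing them.} Your route's advantage is that it never touches moments of $\nu$, which makes it a purely classical argument. However, the "extra assumption" you attach to the moment route is not extra here. In this paper, saying that $a_j$ has probability measure $\nu$ means $\nu$ is the unique measure with moments $\varphi(a_j^m)$, so finite moments of all orders are automatic.

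The moment route also gives more. It yields $a_{N_n}/\sqrt{N_n}\stackrel{*}{\to}0$ in the NCP sense. It also gives weak convergence of every probability measure having the moments of $a_{N_n}/\sqrt{N_n}$. So it never has to ask whether the mixture itself is moment-determinate; only the limit $\delta_{\{0\}}$ must be.

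\textbf{On the hypothesis $K>0$.} Both routes need $N_n\to\infty$ in probability, which you correctly derive from $K>0$ as in Theorem \ref{thm71}. The proposition as printed omits this, and with $K=0$ it fails: take $N_n\equiv 1$, so that $a_{N_n}/\sqrt{N_n}=a_1$ has law $\nu$, which need not be $\delta_{\{0\}}$.
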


\section{Some examples}\label{sec8}
We now present some examples for the results of Sections \ref{sec4}, \ref{sec5} and \ref{sec6}. 
\begin{example}\label{exmp1} Let $N_n$ be a Poisson rv with mean $n>0$.  Then, 
    $\eta_n=n$, $\gamma_n^2=n$, $\sigma_n^2=n\theta^2+n\alpha^2$, and $\delta_n^2=\alpha^2/(\theta^2+\alpha^2)$. From clt, the probability measure of $\tilde{N}_n$ converges weakly to the standard Gaussian probability measure, $\mu_Z$ say. Moreover, $\{(N_n/n)^m\}$ is uniformly integrable for each $m\in\mathbb{N}$. \vskip5pt

\noindent(i) Suppose the summand $\{a_i\}$ of $\mathcal{S}_{N_n}$ are free and identically distributed. For $\alpha\neq0$, from Corollary \ref{corr32}, $\tilde{\mathcal{S}}_{N_n}$ converges in $*$-distribution to $\sqrt{\alpha^2/(\theta^2+\alpha^2)}z+\sqrt{\theta^2/(\theta^2+\alpha^2)}s$, where $s$ and $z$  are independent. 
  If $\tilde{\mathcal{S}}_{N_n}$ has a probability measure $\nu_n$, then  by Lemma \ref{lem:momentconv}, $\nu_n$ converges weakly to $\nu$ which is the probability measure of $\sqrt{\alpha^2/(\theta^2+\alpha^2)}z+\sqrt{\theta^2/(\theta^2+\alpha^2)}s$.\vskip5pt 
  
  
\noindent(ii) If $\{a_i\}$ are independent then for $\alpha\neq0$, from Theorem \ref{thm51}(i), $\tilde{\mathcal{S}}_{N_n}\stackrel{*}{\to} z$.\vskip5pt 
\noindent(iii) If we take $K_n=n^t$ for $t>0$ and $w_n=n$ in Theorem \ref{thm61}, then it can be easily checked that $\{(\frac{N_n^t}{n^t})^m\}$ is uniformly integrable for each $m\in\mathbb{N}$. Moreover, $N_n/n\stackrel{\mathbb{P}}{\to}1$.
\end{example}

\begin{example}
    Let $N_n\sim Bin (n, p)$, 
    $p\in(0,1)$. Then, $\eta_n=np$, $\gamma_n^2=np(1-p)$ and $\sigma_n^2=np\theta^2+np(1-p)\alpha^2$, and $\tilde{N}_n$ is asymptotically Gaussian. 
    In this case too, all conclusions of Example \ref{exmp1} hold true, with appropriate parameters of the limiting probability measures.
\end{example}

\begin{example}
    For $n\in\mathbb{N}$, let $N_n$ be a rv that takes values $n$ and $2n$, each with probability $1/2$. Then, $\eta_n=3n/2$, $\gamma_n^2=n^2/4$ and $\sigma_n^2=(6n\theta^2+n^2\alpha^2)/4$. Let $\{a_i\}$ be free and identically distributed.\vskip5pt
    \noindent(i) Suppose $\alpha\neq0$. Then, conditions of Theorem \ref{thm31} are satisfied. It is easy to check that $\tilde{N}_n$ weakly converges to a random variable $N$ with $\mathbb{P}\{N=\pm 1\}=1/2$.

As $\eta_n/\gamma_n^2\rightarrow0$, from Corollary \ref{corr33}, it follows that the probability measure $\nu_n$ of $\tilde{\mathcal{S}}_{N_n}$ weakly converges to $(\delta_{\{-1\}}+\delta_{\{1\}})/2$, where $\delta$ denotes the Dirac measure.\vskip5pt 
\noindent(ii) We have $\lim_{n\rightarrow\infty}\frac{\gamma_n}{\eta_n}=1/3$. So, $\frac{N_n}{\eta_n}$ weakly converges to a discrete rv taking values $2/3$ and $4/3$, each with probability $1/2$. Also, $\{(N_n/\eta_n)^m\}$ is uniformly integrable for each $m\in\mathbb{N}$.  For $\alpha=0$, from Theorem \ref{thm31} (ii), we get
\begin{equation*}
    \lim_{n\rightarrow\infty}\tilde{\varphi}(\tilde{\mathcal{S}}_{N_n}^m)=\varphi(s^m)\frac{2^{m/2}+4^{m/2}}{2\cdot3^{m/2}}.
\end{equation*}
Therefore, $\nu_n$ weakly converges to a mixture of a semi-circle probability measure and $(\delta_{\{\sqrt{2/3}\}}+\delta_{\{\sqrt{4/3}\}})/2$.
\end{example}
\begin{remark}
    For all the examples above, conditions of Theorem \ref{thm31} hold, but (\ref{pf21}) is not satisfied. Let $P_n$ be a Poisson random variable with mean $n>0$, and set $N_n=n^2+P_n$. Then, all conditions of Theorem \ref{thm31} as well as condition (\ref{pf21}) hold true. 
\end{remark}

\end{document}